\newcommand{\ang}[1]{\langle #1\rangle}
\newcommand{\norm}[1]{\left|\hspace{-1pt}\left\| #1\right\|\hspace{-1pt}\right|}
\newcommand{\bm}[1]{\mbox{{\boldmath$#1$}}} 
\newcommand{\T}{\mathcal{T}}
\newcommand{\dK}{\partial K}
\newcommand{\dKN}{\partial K\setminus\Gamma_N}
\def\rev#1{#1}
\begin{document}
  \title{Hybridized discontinuous Galerkin method for convection-diffusion problems}
  \titlerunning{HDG method for convection-diffusion problems}  
\author{Issei Oikawa} 
\institute{I. Oikawa 
          \at Organization for University Research Initiatives, Waseda University. \\
          }
\date{}
\journalname{}
\maketitle
  
\begin{abstract}
In this paper, we propose a new hybridized discontinuous Galerkin (DG) method for
the convection-diffusion problems with mixed boundary conditions.
\rev{A feature of the proposed method, is that it can greatly reduce the number of globally-coupled 
degrees of freedom}, 
compared with the classical DG methods.
The coercivity of a convective part is achieved by adding an upwinding term.  
We give error estimates of optimal order in the piecewise $H^1$-norm \rev{for general convection-diffusion problems}.
\rev{Furthermore, we prove that the approximate solution given by our scheme
is close to the solution of the purely convective problem when the viscosity coefficient is small.}
Several numerical results are presented to verify the validity  of our method.
\end{abstract}
 
\keywords{Finite element method \and Discontinuous Galerkin method \and Hybridization \and Upwind}

\section{Introduction} 
The discontinuous Galerkin(DG) method\cite{ABCM02,BS08} is now widely applied to various problems in science and engineering because of its flexibility for the choices of approximate functions and element shapes. 
An issue of the DG method is, however, the size and band-widths of the resulting matrices could be much larger than those of the standard finite element method, 
 since the DG method is formulated in terms of the usual nodal values defined in each elements together with those corresponding to inter-element discontinuities. 
In order to surmount this difficulty, it is worth-while trying to extend the idea of the DG method by combining with the hybrid  displacement method 
 (see, for example, \cite{PW05,Tong70,KA72a,KA72b}). 
Thus, we introduce new unknown functions on inter-element edges. 
\rev{We can then obtain a formulation which results in a global system of 
equations involving only the inter-element unknowns. 
Consequently, the size of the system is smaller with respect to those of the classical DG methods.}
Recently, in \cite{KIO09,OK10,Oikawa10}, the author and his colleagues proposed and analyzed a new class of DG methods,
 a {\em hybridized} DG method, that is based on the hybrid displacement approach by stabilizing their old method \cite{KA72a,KA72b}.  
In \cite{KIO09}, we examined our idea by using a linear elasticity problem as a model problem and offered several numerical examples to confirm the validity of our formulation. 
After that, we carried out theoretical analysis by using the Poisson equation as a model problem. 
In \cite{Oikawa10}, the stability and convergence of symmetric and nonsymmetric interior penalty methods of hybrid type were studied. 
The usefulness of the lifting operator in order to ensure a better stability was also studied in \cite{Oikawa10}. 

For second-order elliptic problems,
Cockburn, Dong and Guzm{\'a}n provided  
the first analysis of  hybridization of the DG method  in \cite{CDG08}.
Furthermore, Cockburn and his colleagues are actively contributing to the {\em hybridizable} DG method
\cite{CGSS09,CGL09,CGS10}. They also developed hybridizable schemes for the Stokes problems \cite{CG09,CGCPS11,NPC10,CC12a,CC12b}  
 and the incompressible Navier-Stokes equations \cite{NPC11}.

For convection-diffusion problems, Cockburn et al.\cite{NPC09,CDGRS09} proposed  hybridized
 schemes in terms of numerical fluxes.
The stability of their methods is achieved by choosing the stabilization parameters according to the convection. 
\rev{They reported several numerical results exploring the
convergence properties of their schemes which were later theoretically proven in \cite{CCToAppear}.} 
 In \cite{ES10}, Egger and Sch{\"o}berl proposed a {\em hybridized mixed} method 
 stemmed from the original DG method (see, for example \cite{RH73,Richter92}). %
 Labeur and Wells proposed an upwind numerical flux and provided numerical results in \cite{LW07}.
  In \cite{Wells11}, the error analysis of the scheme proposed in \cite{LW07} for convection-diffusion equations was shown.
  The scheme we are going to propose is essentially the same as their one.
Our hybridized scheme is constructed to satisfy the coercivity on a convective part, while  the other schemes
 were obtained by introducing an upwind numerical flux.
 \rev{The author learned about Wells \cite{Wells11} after the completion of the present study. 
 Actually, the present work was presented firstly at \cite{Oikawa10p1,Oikawa10p2} in 2010. 
Wells \cite{Wells11} studied a kind of the hybridizable discontinuous Galerkin method for convection-diffusion equations.
 However, we provide error estimates for the general convection-diffusion cases, whereas only the purely convective and the purely diffusive ones are considered in \cite{Wells11}.
Moreover, our formulation admits arbitrary shapes of elements
 and the result reported in Section 5 is an actually new investigation. }

The purpose of this paper is to propose a  hybridized DG method for the stationary convection-diffusion problems, and to verify the stability of our scheme theoretically and numerically in the convection-dominated cases, that is, when the diffusive coefficient is very small. 

\smallskip 

Now let us formulate the continuous problem to be considered.
Let $\Omega$ be a bounded  polygonal or polyhedral domain in $\mathbb R^d$ $(d=2,3)$.
We consider the convection-diffusion problems with mixed boundary conditions:
\begin{subequations} \label{cdr}
\begin{eqnarray} 
    -\varepsilon \Delta u + \bm{b} \cdot \nabla u + cu &=& f  \textrm{ in } \Omega, \label{cdr1a} \\
    \varepsilon \nabla u \cdot \bm n &=& g_N  \textrm{ on } \Gamma_N, \label{cdr1b} \\
       u &=& 0  \textrm{ on } \Gamma_D, \label{cdr1c} 
\end{eqnarray}
\end{subequations}
where $\varepsilon>0$ is the diffusion coefficient; $f \in L^2(\Omega)$, $\bm b \in W^{1,\infty}(\Omega)^d$, $c \in L^\infty(\Omega)$, and $g_N \in H^{3/2}(\Omega)$
 are given functions. We assume $\overline{\Gamma_D \cup\Gamma_N} = \partial \Omega$, $\Gamma_D \cap \Gamma_N = \emptyset$,
 and that the inflow boundary is included in $\Gamma_D$, i.e.,
\[  
   \Gamma_- := \{ x \in \partial\Omega: \bm b(x) \cdot \bm n(x) < 0\} \subset \Gamma_D,
\]
where $\bm n$ is the outward unit normal vector to $\partial \Omega$.
Moreover, we assume that there exists a non-negative constant $\rho_0$ such that
\begin{eqnarray} \label{rho} 
   \rho(x) := c(x) -\frac 1 2 \mathrm{div} \bm b(x) \ge \rho_0 \ge 0, \quad \forall x \in \Omega.
\end{eqnarray}
Under these assumptions, the existence and uniqueness of a weak solution $u \in H^1(\Omega)$ follows from  the Lax-Milgram theorem.
We shall pose further regularity on $u$ in the error analysis.

This paper is organized as follows.
In Section 2, we introduce  finite element spaces  to describe our method, and norms and projections  to use in our error analysis. 
Section 3 is devoted to the formulation of our proposed hybridized DG method, and the mathematical analysis is given in Section 4. 
We explain why our proposed  method is stable even when $\varepsilon$ is close to  0 in Section 5. 
In Section 6, we report several results of numerical computations.
Finally, we conclude this paper in Section 7.

\section{Preliminaries}
\subsection{Notation}
\paragraph{Function spaces and norms}
Let $\mathcal T_h = \{K_i\}_i$  be a triangulation of $\Omega$ in the sense of \cite{OK10}.
Thus, each $K \in \mathcal T_h$ is a {\em star-shaped} $m$-polyhedral domain, where $m$ denotes an integer
$m \ge d+1$. The boundary $\partial K$ of $K \in \mathcal T_h$ is composed of $m$-faces.
We assume that $m$ is bounded from above independently a family of triangulations $\{\T_h\}_h$,
and $\partial K$ does not intersect with itself. 
We set $h = \max_{K \in \T_h}  h_K$, where $h_K$ denotes the diameter of $K$.
In this paper, we assume that  $\{\T_h\}_h$ is quasi-uniform.
The {\em skeleton} of $\mathcal T_h$ is 
defined by
\[
   \Gamma_h := \bigcup_{K\in\mathcal T_h} \partial K\setminus \Gamma_N.
\]
We introduce the {\em broken} Sobolev space over $\mathcal T_h$ defined by
\[
 H^k(\mathcal T_h) = \{ v \in L^2(\Omega) : v|_K \in H^k(K) \}
\]
and an $L^2$-space on $\Gamma_h$ defined by
\begin{eqnarray*}
 L^2_D(\Gamma_h) &= \{ \hat v \in L^2(\Gamma_h) : \hat v|_{\Gamma_D} = 0\}.
\end{eqnarray*}
Then, we set $\bm V = H^2(\mathcal T_h) \times L^2_D(\Gamma_h)$.
Throughout this paper, we denote an element in $\bm V$ by $\bm v = \{v,\hat v\}$. 
The inner products are defined as follows
\[
   (u,v)_K = \int_K uvdx, \qquad \ang{\hat u,\hat v}_e = \int_e \hat u\hat v ds,
\]
for $u$, $v$ $\in L^2(K)$ and $\hat u, \hat v \in L^2(e)$, where $K$ is an element of $\mathcal T_h$ and $e$ is an edge $e$ of $K$.
Let $\|\cdot\|_m$ and $|\cdot|_m$ be the usual Sobolev norms and seminorms in the sense of  \cite{AF03}, where $m$ is a positive integer.
We introduce  auxiliary seminorms:
\begin{eqnarray*}
   &&|v|_{m,h}^2 := \sum_{K\in\mathcal T_h} h_K^{2(m-1)}|v|_{m,K}^2 
    \quad \textrm{for} \  v \in H^m(\mathcal T_h), \\
   &&|\bm v|_{j,h}^2 := \sum_{K\in\mathcal T_h} \sum_{e \subset \partial K\setminus\Gamma_N}\left\| \sqrt{\frac{\eta_e}{h_e}} (\hat v -  v)\right\|^2_{0,e} 
    \quad \textrm{for} \ \bm v=\{v,\hat v\} \in \bm V,
\end{eqnarray*}
where $h_K$ is the diameter of $K$, $h_e$ is the length of $e$, and $\eta_e$ is a penalty parameter.
For error analysis, we define the {\em HDG}-norm as follows:  
\begin{eqnarray*}
 &&\norm{\bm  v}^2 := \norm{\bm v}^2_d + \norm{\bm v}^2_{rc}, \\
 && \norm{\bm v}^2_* := \norm{\bm v}^2_d + \norm{\bm v}^2_{rc,*},
\end{eqnarray*}
where 
\begin{eqnarray*}
 &&\norm{\bm v}_d^2 := \varepsilon  \left(|v|^2_{1,h} +  |v|^2_{2,h}+ |\bm v|_{j,h}^2 \right), \\
 &&\norm{\bm v}^2_{rc} :=  \sum_{K\in\mathcal T_h}
 \left( \| |\bm b \cdot \bm n|^{1/2}  (\hat v -v)\|^2_{0,\partial K\setminus\Gamma_N}\right) + \rho_0 \|v\|^2_{0,\Omega}, \\
 && \norm{\bm v}_{rc,*}^2 := \norm{\bm v}^2_{rc} + \|v\|_{0,\Omega}^2
 + \sum_{K\in\T_h}\|v\|_{0,\dK}^2. 
\end{eqnarray*}
Here $\bm v = \{v,\hat v\} \in \bm V$, $\bm n$ denotes the unit outward normal vector to $\partial K$, and $\rho_0$ is the positive constant defined in \eqref{rho}.
\paragraph{Finite element spaces and projections}
 Let $U_h$ and $\hat U_h$  be finite dimensional subspaces of $H^2(\mathcal T_h)$ and $L^2_D(\Gamma_h)$,
respectively. Then we set $\bm V_h := U_h \times \hat U_h$, which is included in $\bm V$. 
Let $P_h$ denote the $L^2$-projection from $H^2(\mathcal T_h)$ onto $U_h$, and
 let  $\hat P_h$ denote the $L^2$-projection from $L^2_D(\Gamma_h)$ onto $\hat U_h$.
Define $\bm P_h : \bm V \rightarrow \bm V_h$ by  $\bm P_h \bm v := \{P_h v, \hat P_h\hat v\}$
 and introduce the $L^2$-projection $\bm P_h^0  : W^{1,\infty}(\Omega)^d$ $\rightarrow \mathcal P^0(\mathcal T_h)^d$,
 where $\mathcal P^0(\mathcal T_h)$ is piecewise constant functions. 
In this paper, we assume that:
\begin{description}
\item{(H1)}
   $\nabla v_h \in [U_h]^d  \quad \forall v_h \in U_h.$
\item{(H2)}
   $\mathcal P^0(\mathcal T_h) \subset U_h.$
\item{(H3)}
   ({\em \rev{Approximation} properties})
   There exist positive constants $C$ independent of $\epsilon$ and $h$ such that, for all $v \in H^{k+1}(K)$,
   \begin{eqnarray*}
      &&| v - P_h v |_{i,K} \le C h^{k+1-i} |v|_{k+1,K}  \quad (i = 0,1), \\
      &&\| v - \hat P_h (v|_e) \|_{0,e} \le Ch^{k+1/2} |v|_{k+1,K},
   \end{eqnarray*}
   where $K \in \T_h$ and $e$ is an edge of $K$.
\end{description}
\rev{For example, we can take $U_h$ and $\hat U_h$ to be piecewise polynomials.   
 Although approximate functions on $\Gamma_h$ are allowed to be discontinuous at each vertex,
 we can use continuous functions for $\hat u_h$ to reduce the number of degrees of freedom.
 There is no difference between continuous and discontinuous approximations
 with respect to convergence properties.
 However, the discontinuous approximations show better stability properties than the continuous ones in the convection-dominated cases, 
 which will be presented in Section 6. 
 }
\begin{lemma} \label{lem:proj}
  Under the assumption (H3), for all $\bm v = \{ v , v|_{\Gamma_h}\}$ with $v \in H^{k+1}(\Omega)$,
  there exist positive constants $C$ independent of $\varepsilon$ and $h$ such that 
  \begin{eqnarray}
  &\norm{\bm v - \bm P_h \bm v}_d \le C\varepsilon^{1/2} h^k |v|_{k+1},\\
  &\norm{\bm v - \bm P_h \bm v}_{rc} \le Ch^{k+1/2} |v|_{k+1}, \\
  &\norm{\bm v - \bm P_h \bm v}_{rc,*} \le Ch^{k+1/2} |v|_{k+1}. 
  \end{eqnarray}
  \end{lemma}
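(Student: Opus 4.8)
The plan is to bound separately each term constituting the three norms, writing $w:=v-P_hv$, $\hat w:=v|_{\Gamma_h}-\hat P_h(v|_{\Gamma_h})$, and $\bm w:=\bm v-\bm P_h\bm v=\{w,\hat w\}$. On a face $e\subset\dKN$ the key reduction is the splitting
\[
  \hat w - w = \bigl(v|_e-\hat P_h(v|_e)\bigr) + \bigl(P_hv-v\bigr)|_e ,
\]
whose first summand is controlled by the second estimate in (H3) and whose second summand is controlled by the scaled trace inequality $\|\varphi\|_{0,\dK}^2\le C(h_K^{-1}\|\varphi\|_{0,K}^2+h_K|\varphi|_{1,K}^2)$ applied to $\varphi=v-P_hv$ together with the $i=0,1$ cases of (H3). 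This yields $\|\hat w-w\|_{0,e}\le Ch^{k+1/2}|v|_{k+1,K}$ and $\|v-P_hv\|_{0,\dK}\le Ch^{k+1/2}|v|_{k+1,K}$, with $C$ independent of $\varepsilon$ and $h$.

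For $\norm{\bm w}_d^2=\varepsilon\bigl(|w|_{1,h}^2+|w|_{2,h}^2+|\bm w|_{j,h}^2\bigr)$, the first term is $\le Ch^{2k}|v|_{k+1}^2$ by the $i=1$ case of (H3), and the jump seminorm $|\bm w|_{j,h}^2=\sum_K\sum_e(\eta_e/h_e)\|\hat w-w\|_{0,e}^2$ is $\le Ch^{2k}|v|_{k+1}^2$ by the face estimate above, quasi-uniformity ($h_e\simeq h$) and the boundedness of $\eta_e$. The term $|w|_{2,h}^2$ is the only one not covered directly by (H3); I would bound $|v-P_hv|_{2,K}$ by inserting a polynomial $q$ of degree $k$ on $K$ with $|v-q|_{i,K}\le Ch^{k+1-i}|v|_{k+1,K}$ for $i=0,1,2$ (Bramble–Hilbert), splitting $v-P_hv=(v-q)+(q-P_hv)$: the first part is $\le Ch^{k-1}|v|_{k+1,K}$, and for the second an inverse inequality gives $|q-P_hv|_{2,K}\le Ch_K^{-1}|q-P_hv|_{1,K}\le Ch_K^{-1}(|q-v|_{1,K}+|v-P_hv|_{1,K})\le Ch^{k-1}|v|_{k+1,K}$ by (H3) again. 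Multiplying by $h_K^2\simeq h^2$ and summing over $K$ gives $|w|_{2,h}^2\le Ch^{2k}|v|_{k+1}^2$, hence $\norm{\bm w}_d^2\le C\varepsilon h^{2k}|v|_{k+1}^2$, which is the first estimate.

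The other two estimates use the same ingredients. In $\norm{\bm w}_{rc}^2=\sum_K\||\bm b\cdot\bm n|^{1/2}(\hat w-w)\|_{0,\dKN}^2+\rho_0\|w\|_{0,\Omega}^2$ the face term is $\le\|\bm b\|_{L^\infty(\Omega)}\sum_K\|\hat w-w\|_{0,\dKN}^2\le Ch^{2k+1}|v|_{k+1}^2$ and $\rho_0\|w\|_{0,\Omega}^2\le Ch^{2k+2}|v|_{k+1}^2$ by the $i=0$ case of (H3), so $\norm{\bm w}_{rc}^2\le Ch^{2k+1}|v|_{k+1}^2$; for $\norm{\bm w}_{rc,*}^2=\norm{\bm w}_{rc}^2+\|w\|_{0,\Omega}^2+\sum_K\|w\|_{0,\dK}^2$ one further adds $\|w\|_{0,\Omega}^2\le Ch^{2k+2}|v|_{k+1}^2$ and $\sum_K\|w\|_{0,\dK}^2\le Ch^{2k+1}|v|_{k+1}^2$ by the trace estimate, giving $\norm{\bm w}_{rc,*}^2\le Ch^{2k+1}|v|_{k+1}^2$. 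Taking square roots yields the $h^{k+1/2}$ bounds, and no constant involves $\varepsilon$ since those in (H3), in the trace inequality and in the inverse inequality depend only on the quasi-uniformity of $\{\T_h\}_h$ (and on $\|\bm b\|_{L^\infty(\Omega)}$, $\eta_e$).

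The main obstacle is the piecewise second-seminorm term $|w|_{2,h}$: because (H3) provides approximation estimates only for $i=0,1$, the bound on $|v-P_hv|_{2,K}$ cannot be read off directly and must be obtained by comparing with a local polynomial and paying one inverse inequality — the step where quasi-uniformity of the mesh genuinely enters. The remaining work — tracking face lengths, the boundedness of $\eta_e$ and $\|\bm b\|_{L^\infty(\Omega)}$, and checking that the lower-order volume terms are subdominant — is routine under the standing assumptions.
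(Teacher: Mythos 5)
Your proof is correct and follows the only natural route; the paper itself dismisses this lemma with ``this follows immediately from the definitions,'' and your argument simply supplies the omitted details (the (H3) bounds, the scaled trace inequality for the face terms, and quasi-uniformity). Your observation that the $|w|_{2,h}$ term is not directly covered by (H3) and needs a local polynomial plus an inverse inequality is a fair and correctly handled point that the paper glosses over.
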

\begin{proof}
This follows immediately from the definitions. \qed
\end{proof}

\subsection{Inequalities} 
In this section, we quote several useful inequalities for error analysis without proof.
Refer to  \cite{BS08} for the proofs.
\begin{theorem}  \label{ineq}
Let $K \in \mathcal T_h$ and $e$ be an edge of $K$. 
\begin{enumerate}
\item (Trace inequality) There exists a constant $C$ independent of $K$ and $e$ such that
    \begin{eqnarray}  \label{trace-ineq}
       \|v\|_{0,e} \le Ch_e^{-1/2} \left(\|v\|_{0,K}^2 + h_K^2|v|_{1,K}^2 \right)^{1/2}
   \qquad \forall v \in H^1(K).
    \end{eqnarray}  
\item (Inverse inequality) There exists a constant $C$ independent of $K$  such that
  \begin{eqnarray}  \label{inverse-ineq}
       |v_h|_{1,K} \le Ch_K^{-1} \|v_h\|_{0,K} \qquad \forall v_h \in U_h.
   \end{eqnarray}  
\end{enumerate}
\end{theorem}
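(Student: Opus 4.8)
The plan is to prove both inequalities by reducing to a \emph{normalized} element, exploiting the structural assumption from \cite{OK10} that each $K\in\T_h$ is star-shaped with respect to a ball $B(x_0,\gamma h_K)$ whose radius is comparable to $h_K$, with $\gamma>0$ uniform over $\{\T_h\}_h$ by quasi-uniformity. Throughout, $x_0$ denotes the center of such a ball.

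For the trace inequality \eqref{trace-ineq}, I would avoid a single reference element (unavailable for general star-shaped polyhedra) and argue directly via the divergence theorem. Put $\phi(x)=x-x_0$, so that $\nabla\cdot\phi=d$ and $|\phi|\le h_K$ on $K$. The geometric heart of the proof is the pointwise lower bound $(x-x_0)\cdot\bm n\ge\gamma h_K$ on all of $\partial K$, obtained by testing the star-shapedness condition against the boundary point $x_0+\gamma h_K\bm n$ of the ball. Applying the divergence theorem to $v^2\phi$ then gives
\[
 \gamma h_K\,\|v\|_{0,\partial K}^2 \le \int_{\partial K} v^2\,(\phi\cdot\bm n)\,ds = \int_K \left( d\,v^2 + 2\,v\,\phi\cdot\nabla v\right) dx,
\]
and bounding the right-hand side by $d\|v\|_{0,K}^2+2h_K\|v\|_{0,K}|v|_{1,K}$ and absorbing the cross term with Young's inequality yields $\|v\|_{0,\partial K}^2\le Ch_K^{-1}(\|v\|_{0,K}^2+h_K^2|v|_{1,K}^2)$. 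Since $\|v\|_{0,e}\le\|v\|_{0,\partial K}$ and $h_e$ is comparable to $h_K$ by quasi-uniformity, \eqref{trace-ineq} follows; a density argument extends the identity from smooth $v$ to all $v\in H^1(K)$.

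For the inverse inequality \eqref{inverse-ineq}, I would rescale. Set $\hat K=h_K^{-1}(K-x_0)$ and $\hat v(\hat x)=v_h(x_0+h_K\hat x)$; then $\hat K$ has unit diameter and contains the \emph{fixed} ball $B(0,\gamma)$, while $\hat v$ is the corresponding polynomial of degree $\le k$. Granting the scale-free estimate $|\hat v|_{1,\hat K}\le\hat C\|\hat v\|_{0,\hat K}$ on $\hat K$ and transforming back through $|v_h|_{1,K}=h_K^{d/2-1}|\hat v|_{1,\hat K}$ and $\|v_h\|_{0,K}=h_K^{d/2}\|\hat v\|_{0,\hat K}$ produces \eqref{inverse-ineq} with constant $\hat C$. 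The delicate point, and the main obstacle, is precisely this scale-free estimate: because the normalized elements $\hat K$ are not all affine images of one reference domain, $\hat C$ cannot be read off a single shape and must be shown uniform over the entire admissible family. I would secure this either by a compactness argument, using that the normalized star-shaped domains containing $B(0,\gamma)$ and contained in the unit ball form a precompact family on which $\hat C$ depends continuously, or more directly by invoking a Markov-type inverse estimate for polynomials of degree $\le k$ on any domain containing a ball of radius $\gamma$, whose constant depends only on $\gamma$, $k$, and $d$. Either route replaces the usual single-reference-element step and delivers a constant independent of $K$.
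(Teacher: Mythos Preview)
Your argument is correct, but the paper does not prove this theorem at all: it simply states the two inequalities and refers the reader to \cite{BS08}. So there is no paper proof to compare against; you are supplying what the paper omits.

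That said, your choices are the right ones for this setting. The divergence-theorem proof of \eqref{trace-ineq} via $v^2(x-x_0)$, together with the lower bound $(x-x_0)\cdot\bm n\ge\gamma h_K$ coming from star-shapedness with respect to a ball, is exactly what is needed when the elements are general star-shaped polyhedra rather than affine images of a fixed simplex; a single reference element is unavailable, and your argument sidesteps that cleanly. For \eqref{inverse-ineq} you correctly isolate the genuine issue---uniformity of the scale-free constant over the whole normalized family $\{\hat K\}$ in the absence of a common reference element---and both remedies you propose (compactness of the admissible normalized shapes, or a Markov-type polynomial estimate on domains sandwiched between $B(0,\gamma)$ and $B(0,1)$) are valid, the second giving an explicit constant depending only on $\gamma$, $k$, $d$. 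One minor caveat: your final step replacing $h_K^{-1}$ by $h_e^{-1}$ in the trace bound requires $h_e\sim h_K$, which is a local shape-regularity condition (implicit in the mesh assumptions of \cite{OK10}) rather than a consequence of quasi-uniformity alone; it is worth saying so explicitly.
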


\section{A hybridized DG method}
\subsection{Formulation}
Now, we are ready to show our hybridized DG method.
 We first state  our formulation:
Find $\bm u_h \in \bm V_h$ such that
\begin{eqnarray}  \label{hdg-cdr}
   B_h(\bm u_h, \bm v_h) 
  = (f,v_h)_\Omega + \ang{g_N, v_h}_{\Gamma_N} \quad \forall \bm v_h \in \bm V_h,
\end{eqnarray}
where
\begin{eqnarray}   
  && B_h(\bm u_h, \bm v_h) := B_h^d(\bm u_h, \bm v_h) + B_h^{rc}(\bm u_h, \bm v_h), \\
  && B^d_h(\bm u_h, \bm v_h)  =   \varepsilon \sum_{K\in\T_h} \bigg[(\nabla u_h, \nabla v_h)_K  \label{hdg-d} 
      + \ang{\frac{\partial u_h}{\partial n}, \hat v_h - v_h}_{\partial K\setminus\Gamma_N}  \\
  && \qquad \qquad \qquad \qquad
    +\ang{\frac{\partial v_h}{\partial n}, \hat u_h - u_h}_{\partial K\setminus\Gamma_N} 
    + \sum_{e \subset \partial K\setminus \Gamma_N} \frac{\eta_e}{h_e} \ang{\hat u_h -  u_h, \hat v_h - v_h}_e 
   \bigg], \nonumber \\
  && B_h^{rc}(\bm u_h, \bm v_h) \label{hdg-rc}
     = \sum_{K\in\T_h}\bigg[ (\bm b \cdot \nabla u_h + cu_h, v_h)_K    \\
  && \qquad \qquad \qquad \qquad  + \ang{\hat u_h -  u_h, [\bm b \cdot \bm n]_+ \hat v_h - [\bm b\cdot \bm n]_- v_h}_{\partial K\setminus\Gamma_N} \bigg], \nonumber\\
  && (f,v_h)_\Omega = \int_\Omega f v_h dx, \\
  &&  \ang{g_N, v_h}_{\Gamma_N} = \int_{\Gamma_N} g_N v_h ds.
\end{eqnarray}
Here $\eta_e$ is a penalty parameter with $\eta_e \ge \eta_{{\it min}} > 0$, 
 $h_e$ is the length of an edge $e$, and the brakets $[\,\cdot\,]_+$ and $[\,\cdot\,]_-$ appearing in \eqref{hdg-rc} denote functions satisfying for some constant $\gamma>0$, 
\begin{eqnarray} \label{eq:bracket}
   [x]_+ + [x]_- \ge  \gamma |x|  \quad \textrm{ and }  \quad [x]_+ - [x]_- = x\qquad (x \in \mathbb R).
\end{eqnarray}
As such functions, we can take 
\begin{equation}
    [x]_+ = \max(0,x), \qquad [x]_-  = \max(0,-x).
\end{equation}
Note that, for all $x \in \mathbb{R}$, it follows that
\begin{equation} \label{upflux}
     [x]_+ + [x]_- = |x|, \qquad  [x]_+ - [x]_- = x.
\end{equation}
The last term in the right-hand side of \eqref{hdg-rc} is an upwinding term which makes
 the coercivity of $B_h^{rc}(\cdot,\cdot)$ hold.
\subsection{Derivation}
\rev{Before proceeding to the analysis of the scheme \eqref{hdg-cdr}, we show how to obtain it.
Multiplying both sides of \eqref{cdr1a} by a test function $v \in H^2(\mathcal T_h)$ and integrating them over the element $K$, we have, after integrating by parts and after adding over all the elements $K \in \mathcal T_h$, }
\begin{eqnarray}
   \sum_{K\in\T_h} \bigg[\varepsilon (\nabla u, \nabla v)_K - \varepsilon\ang{\frac{\partial u}{\partial n}, v}_{\partial K\setminus\Gamma_N} 
    & &+ (\bm b\cdot \nabla u + cu, v)_K \bigg]   \label{diff1} \\
   & &  = (f,v)_\Omega + \ang{g_N,v}_{\Gamma_N}. \nonumber
\end{eqnarray}
We denote the diffusive part and  convective part in \eqref{diff1}  by $D(\cdot,\cdot)$ and $C(\cdot,\cdot)$, respectively, i.e., 
\begin{eqnarray}
   D(u,v) &:=& \varepsilon\sum_{K\in\T_h} \left[ (\nabla u, \nabla v)_K - \ang{\frac{\partial u}{\partial n}, v}_{\partial K\setminus\Gamma_N}\right],  \label{diff-part} \\
   C(u,v) &:=& \sum_{K\in\T_h} (\bm b\cdot \nabla u + cu, v)_K. \label{conv-part}
\end{eqnarray}
We first derive our formulation of the diffusive part. From the continuity of the flux, we have
\begin{eqnarray}
  \label{diff2}
  \sum_{K\in\T_h} \ang{\frac{\partial u}{\partial n}, \hat v}_{\partial K\setminus\Gamma_N} = 0 
  \qquad \forall \hat v \in \hat L^2_D(\Gamma_h).
\end{eqnarray}
Adding \eqref{diff2} to \eqref{diff-part} yields 
\begin{eqnarray}
  \label{diff3}
  D(\bm u,\bm v) = \varepsilon\sum_{K\in\T_h} \left[ (\nabla u, \nabla v)_K + \ang{\frac{\partial u}{\partial n}, \hat v-v}_{\partial K\setminus\Gamma_N} \right],
\end{eqnarray}
where $\bm u = \{ u, \hat u\}$ and $\bm v = \{v,\hat v\} \in \bm V_h$.
Symmetrizing \eqref{diff3} and adding the following penalty term
\begin{eqnarray}
  \label{penalty}
  \sum_{K\in\T_h} \sum_{e \subset \partial K\setminus\Gamma_N} \ang{\frac{\eta_e}{h_e}(\hat u - u), \hat v-v}_{e},
\end{eqnarray}
we obtain $\eqref{hdg-d}$.

Next, we derive the formulation of the convective part. 
Let $\alpha$ and $\beta$ be coefficients to be determined later, and we consider the following form:
\begin{eqnarray}
  C_h(\bm u_h,\bm v_h) &:=& \sum_{K\in\T_h} \Big[ (\bm b\cdot \nabla u_h + cu_h, v_h)_K  \\
               & & \quad +\ang{\hat u_h -u_h, \alpha \hat v_h - \beta v_h}_{\partial K\setminus\Gamma_N} \Big]. \nonumber
\end{eqnarray}
The coefficients  $\alpha$ and $\beta$ are chosen to satisfy the coercivity of $C_h(\cdot,\cdot)$, namely, for some constant $C_c^{rc}>0$,  
\begin{equation} \label{coerc0}
  C_h(\bm v_h, \bm v_h)  \ge C_c^{rc} \norm{\bm v_h}_{rc} \quad \forall \bm v_h \in \bm V_h. 
\end{equation}
The left-hand side in the above can be rewritten as follows:
\begin{eqnarray}
 &&C_h(\bm v_h, \bm v_h) =  \\
 &&\sum_{K\in\T_h}\bigg[ (\rho v_h,v_h)_K + \ang{( \frac 1 2(\bm b\cdot \bm n)  + \beta)v_h,v_h}_{\partial K\setminus\Gamma_N} \nonumber \\
  &&      \qquad \qquad  -\ang{(\alpha+\beta)v_h,\hat v_h}_{\partial K\setminus\Gamma_N} 
                +\ang{\alpha \hat v_h, \hat v_h}_{\partial K\setminus\Gamma_N}\bigg] \nonumber
\end{eqnarray}
for any $\bm v_h \in \bm V_h$, where $\rho$ is the function defined in \eqref{rho}.
 We can find the following conditions to be satisfied
\begin{equation} \label{eq:ab}
    \alpha + \beta  = (\bm b \cdot \bm n) + 2\beta \ge \gamma |\bm b \cdot \bm n|,
\end{equation}
from which it follows that
\begin{equation} \label{upw1}
 \alpha+\beta \ge \gamma |\bm b \cdot \bm n|, \quad  \alpha - \beta = \bm b \cdot \bm n.
\end{equation}
We rewrite the coefficients as  $\alpha = [\bm b \cdot \bm n]_+$ and  $\beta =  [\bm b \cdot \bm n]_-$.
Thus we obtain our formulation \eqref{hdg-cdr}.

\subsection{Relation with other HDG schemes}

\rev{
 As mentioned in the Introduction, our scheme is essentially same as in \cite{Wells11}.
We here remark on the relation between the schemes proposed in \cite{NPC09,CCToAppear} and ours.
In their method,
eliminating the auxiliary variable $\bm q_n$ and taking the stabilization parameter
  $
    \tau = \varepsilon \eta_e/h_e + \max(0,\bm b \cdot \bm n)
$
on each edge $e$, we obtain the almost same scheme as ours.
}
\subsection{Local conservativity}
Let $K$ be an element of $\T_h$, and let $\chi_K$ denote a characteristic function on $K$.
Taking $\bm v_h = \{\chi_K,0\}$ in \eqref{hdg-cdr},
we see that our hybridized method satisfies a local conservation property, i.e.
\begin{equation} \label{loccons}
     \int_K (\bm b \cdot \nabla u_h + cu_h) dx  -\int_{\dK} \hat{\bm\sigma}(\bm u_h) \cdot \bm n ds
      = \int_K f dx + \int_{\Gamma_N \cap \partial K} g_N ds,
\end{equation}
where  $\hat{\bm \sigma}$ is an upwind numerical flux,  defined as follows:
\[
     \hat{\bm \sigma}(\bm u_h) := \varepsilon (\nabla u_h + \frac{\eta_e}{h_e} (\hat u_h- u_h)\bm n)
      + [\bm b \cdot \bm n]_-(\hat u_h -u_h)\bm n.
\]
This property is appropriate in a convection-diffusion regime.
Note that the conforming finite element method does not possess such a property in general.   

\section{Error analysis}
In this section, we shall establish error estimates for \eqref{hdg-cdr}.
\begin{lemma} \label{lem:diff}
 For the bilinear form corresponding to the diffusive part, we have the following properties.
\begin{enumerate}
\item (Boundedness)
     There exists a constant $C_b^d>0$ such that
   \begin{eqnarray} \label{bdd-d} 
|B_h^d(\bm w,\bm v)| \le C_b^d \norm{\bm{w}}_d  \norm{\bm{v}}_d 
     \quad \forall \bm w, \bm v \in \bm V.
   \end{eqnarray}
\item (Coercivity)  There exists a constant $C_c^d>0$ such that
   \begin{eqnarray}  \label{cc-d}
          B_h^d(\bm v_h,\bm v_h) \ge C_c^d \norm{\bm v_h}^2_d \quad \forall \bm v_h \in \bm V_h.
   \end{eqnarray}
   \noindent    Here $C_b$ and $C_c^d$ are independent of $\varepsilon$ and $h$.
\end{enumerate}
\end{lemma}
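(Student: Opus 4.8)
The plan is to verify both estimates by expanding $B_h^d$ according to its definition \eqref{hdg-d} and matching each of the four groups of terms against the pieces of $\norm{\cdot}_d^2 = \varepsilon(|v|_{1,h}^2 + |v|_{2,h}^2 + |\bm v|_{j,h}^2)$. For \textbf{boundedness}, I would bound the four bracketed sums term by term. The volume term $\varepsilon(\nabla w,\nabla v)_K$ is immediate by Cauchy--Schwarz and the definition of $|\cdot|_{1,h}$. The two symmetric flux terms $\varepsilon\ang{\partial w/\partial n,\hat v-v}_{\dKN}$ (and with the roles of $w,v$ swapped) require a trace inequality: apply \eqref{trace-ineq} to $\partial w/\partial n = \nabla w\cdot\bm n$ on $e$, which produces $h_e^{-1/2}(\|\nabla w\|_{0,K}^2 + h_K^2|\nabla w|_{1,K}^2)^{1/2}$; under quasi-uniformity $h_e\sim h_K$, so after multiplying by the factor $h_e/\eta_e$ hidden in $|\bm v|_{j,h}$ and pairing with $\sqrt{\eta_e/h_e}(\hat v-v)$, these contribute $C\varepsilon^{1/2}(|w|_{1,h}^2+|w|_{2,h}^2)^{1/2}\cdot\varepsilon^{1/2}|\bm v|_{j,h}$, which is controlled by $\norm{\bm w}_d\norm{\bm v}_d$. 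The penalty term $\sum_e(\eta_e/h_e)\ang{\hat w-w,\hat v-v}_e$ is handled directly by Cauchy--Schwarz on $e$ against $|\bm w|_{j,h}|\bm v|_{j,h}$. Summing over $K$ and $e$ and using $\eta_e\ge\eta_{\min}>0$ to keep constants uniform gives \eqref{bdd-d}.

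For \textbf{coercivity}, setting $\bm w=\bm v=\bm v_h$ the two symmetric cross terms coincide, so
\[
B_h^d(\bm v_h,\bm v_h)=\varepsilon\sum_{K\in\T_h}\Bigl[\,|v_h|_{1,K}^2 + 2\ang{\tfrac{\partial v_h}{\partial n},\hat v_h-v_h}_{\dKN} + \sum_{e\subset\dKN}\tfrac{\eta_e}{h_e}\|\hat v_h-v_h\|_{0,e}^2\,\Bigr].
\]
The middle term is the troublesome one and must be absorbed. I would estimate it via the trace inequality and then the inverse inequality \eqref{inverse-ineq} — this is where hypothesis (H1) enters, since $\nabla v_h\in[U_h]^d$ lets me apply \eqref{inverse-ineq} to $\nabla v_h$ to get $|\nabla v_h|_{1,K}\le Ch_K^{-1}\|\nabla v_h\|_{0,K}$ — so that $\|\partial v_h/\partial n\|_{0,e}\le Ch_e^{-1/2}\|\nabla v_h\|_{0,K}$ with a constant independent of $h$ and $\varepsilon$. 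Then Young's inequality with a small parameter $\delta$ gives
\[
2\varepsilon\bigl|\ang{\tfrac{\partial v_h}{\partial n},\hat v_h-v_h}_e\bigr|
\le \delta\varepsilon\|\nabla v_h\|_{0,K}^2 + \tfrac{C}{\delta}\,\tfrac{\varepsilon}{h_e}\|\hat v_h-v_h\|_{0,e}^2 .
\]
Choosing $\delta$ small enough (say $\delta=1/2$) absorbs the first piece into $\varepsilon|v_h|_{1,h}^2$, and choosing $\eta_{\min}$ large enough (or noting $C/\delta$ is a fixed constant and requiring $\eta_e\ge\eta_{\min}>2C/\delta$) leaves a positive multiple of the penalty term; the leftover also dominates $\varepsilon|v_h|_{2,h}^2$ via the same inverse-inequality bound $|v_h|_{2,K}\le Ch_K^{-1}|v_h|_{1,K}$ (again using (H1) applied to $\nabla v_h$). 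Collecting, $B_h^d(\bm v_h,\bm v_h)\ge C_c^d\varepsilon(|v_h|_{1,h}^2+|v_h|_{2,h}^2+|\bm v_h|_{j,h}^2)=C_c^d\norm{\bm v_h}_d^2$.

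The \textbf{main obstacle} is the cross term in the coercivity estimate: it is the only term that is not manifestly sign-definite, and controlling it cleanly requires the combination of the trace inequality, the inverse inequality, and hypothesis (H1) to trade the normal derivative on $e$ for an $L^2$ interior norm with the correct power of $h$, all with $\varepsilon$- and $h$-independent constants; one then needs the penalty parameter to be bounded below by a fixed threshold so that the absorption leaves something positive. Everything else is a routine bookkeeping exercise with Cauchy--Schwarz, Young's inequality, and quasi-uniformity $h_e\sim h_K$. Note that boundedness is asserted on all of $\bm V$ (not just $\bm V_h$), which is consistent because the boundedness argument uses only trace inequalities, not inverse inequalities; the $|\cdot|_{2,h}$ term in $\norm{\cdot}_d$ is precisely what makes the flux terms bounded on the larger space $H^2(\T_h)$.
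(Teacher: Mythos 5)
Your proposal is correct and follows essentially the same route as the paper: boundedness via Cauchy--Schwarz plus the trace inequality (with the $|\cdot|_{2,h}$ term in $\norm{\cdot}_d$ controlling the normal fluxes on all of $\bm V$), and coercivity by absorbing the cross term through trace, the inverse inequality applied to $\nabla v_h$ via (H1), and Young's inequality with $\eta_{\min}$ sufficiently large. Your explicit recovery of the $\varepsilon|v_h|_{2,h}^2$ piece by the inverse inequality is in fact the precise, $h$-uniform justification of the paper's brief ``norm equivalence on $\bm V_h$'' remark.
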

\begin{proof}
We first prove the boundedness.
Applying the Schwarz inequality to each term of \eqref{hdg-d}, we have
\begin{eqnarray} \label{bdd1}
        |B_h^d(\bm w, \bm v)|
        &\le& \varepsilon \sum_{K\in\T_h}\Bigg[
            \|\nabla w\|_{0,K}\|\nabla v\|_{0,K}   \\
        &  &\quad+  \sum_{e \subset \partial K\setminus\Gamma_N}\bigg(\left\| \frac{\partial w}{\partial n} \right\|_{0,e}\|\hat v-v\|_{0,e}
            +  \left\| \frac{\partial v}{\partial n} \right\|_{0,e}\|\hat w-w\|_{0,e} \nonumber \\
        & & \qquad \qquad \qquad +  \left\|\sqrt\frac{\eta_e}{h_e} (\hat w-w)\right\|_{0,e}        \left\|\sqrt\frac{\eta_e}{h_e}(\hat v-v)\right\|_{0,e}\bigg)
            \Bigg]. \nonumber
\end{eqnarray}
From the trace theorem, we have
\begin{eqnarray} \label{bdd2}
      \left\| \frac{\partial w}{\partial n} \right\|_{0,e} \le C h_e^{-1/2} (|w|_{1,K}^2+h_K^2|w|_{2,K}^2)^{1/2}.
\end{eqnarray}
From \eqref{bdd1}, \eqref{bdd2} and the Cauchy-Schwarz inequality, it follows that
\begin{eqnarray*}
      \label{bbd3}
|B_h^d(\bm w, \bm v)|   \le \max(1+C\eta_{{\it min}}^{-1/2}, 2) \norm{\bm w}_d \norm{\bm v}_d.
\end{eqnarray*}
 Next, we prove the coercivity. By definition, 
\begin{eqnarray}\label{cc1}
     B_h^d(\bm v_h, \bm v_h) &\ge& |v_h|_{1,h}^2  +|\bm v_h|_{j,h}^2 \\ 
       && \quad  -2\sum_{K\in\T_h} \left(\sum_{e\subset \dK\setminus\Gamma_N}\left\| 
       \frac{\partial v_h}{\partial n} \right\|_{0,e}\|\hat v_h-v_h\|_{0,e}  \right). 
       \nonumber
\end{eqnarray}
By the trace theorem, the inverse and Young's inequalities, we have for any $\delta \in (0,1)$,
\begin{eqnarray} \label{cc2}
         \left\| \frac{\partial v_h}{\partial n} \right\|_{0,e} \|\hat v_h - v_h\|_{0,e}
        & \le& \frac{C}{h_e} |v_h|_{1,K} \|\hat v_h - v_h\|_{0,e}  \\
        & \le& \frac{C}{2\delta\eta_e}|v_h|_{1,K}^2 + \frac{\delta}{2} \left\|\sqrt \frac{\eta_e}{h_e} (\hat v_h- v_h)\right\|_{0,e}^2    \quad   \forall v_h \in U_h. \nonumber
\end{eqnarray}
 From \eqref{cc1} and \eqref{cc2}, we obtain 
\begin{eqnarray}
    \label{cc3}
     B_h^d(\bm v_h, \bm v_h) \ge 
         \left( 1 - \frac{C}{\delta\eta_{{\it min}}}  \right) |v_h|_{1,h}^2 + \left(1 -  \delta\right) |\bm v_h|_{j,h}^2. 
\end{eqnarray}
If $\eta_{{\it min}} > 4C$, then we can take $\delta = \sqrt{C/\eta_{{\it min}}}< 1/2$, which implies that 
\[ 
  1 - \frac{C}{\delta\eta_{{\it min}}} > 1/2, \quad  1- \delta > 1/2.
\]
Hence we have   
\begin{eqnarray*}
      B_h^d(\bm v_h, \bm v_h) &\ge& \frac 1 2 (|v_h|_{1,h}^2 + |\bm v_h|_{j,h}^2)
   =:  \frac 1 2 \norm{\bm v_h}_{d,h}^2.
\end{eqnarray*}
Since the norms $\norm\cdot_d$ and $\norm{\cdot}_{d,h}$
 are equivalent to each other over the finite dimensional space $\bm V_h$, we obtain the coercivity \eqref{cc-d}. \qed
\end{proof}
\begin{lemma} \label{lem:rc}
For the bilinear form corresponding to the convective part, we have the following properties.
\begin{enumerate}
\item There exists a constant $C_b^{rc}>0$ such that for all $\bm v \in \bm V$, $\bm w_h \in \bm V_h$,
   \begin{eqnarray} \label{bdd-rc}
      |B_h^{rc}(\bm v - \bm P_h \bm v,\bm w_h)| \le C_b^{rc}\norm{\bm v - \bm P_h\bm v}_{rc,*} 
           \norm{\bm w_h}_{rc}. \quad 
   \end{eqnarray}
\item (Coercivity) There exists a constant $C_c^{rc}>0$ such that
   \begin{eqnarray} \label{cc-rc} 
      B_h^{rc}(\bm v_h,\bm v_h) \ge C_c^{rc} \norm{\bm v_h}^2_{rc} \quad \forall \bm v_h \in \bm V_{h}.
   \end{eqnarray}
\end{enumerate}
  Here $C_b^{rc}$ and $C_c^{rc}$ are independent of $h$.
\end{lemma}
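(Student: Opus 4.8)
I treat the two parts separately. \emph{Coercivity.} Putting $\bm v_h$ in both slots, I integrate the elementwise term $(\bm b\cdot\nabla v_h,v_h)_K=\tfrac12\int_K\bm b\cdot\nabla(v_h^2)\,dx$ by parts, which produces $(\rho v_h,v_h)_K$, with $\rho$ as in \eqref{rho}, together with the face contribution $\tfrac12\ang{(\bm b\cdot\bm n)v_h,v_h}_{\partial K}$. Splitting $\partial K$ into $\dKN$ and $\partial K\cap\Gamma_N$ and adding the upwinding term of \eqref{hdg-rc}, I use the two identities of \eqref{eq:bracket} — $[\bm b\cdot\bm n]_+-[\bm b\cdot\bm n]_-=\bm b\cdot\bm n$ and $[\bm b\cdot\bm n]_++[\bm b\cdot\bm n]_-\ge\gamma|\bm b\cdot\bm n|$ — to complete the square on each face of $\dKN$, obtaining $\tfrac12\ang{([\bm b\cdot\bm n]_++[\bm b\cdot\bm n]_-)(\hat v_h-v_h),\hat v_h-v_h}_{\dKN}$ plus two remainders, $\tfrac12\ang{(\bm b\cdot\bm n)\hat v_h,\hat v_h}_{\dKN}$ and $\tfrac12\ang{(\bm b\cdot\bm n)v_h,v_h}_{\partial K\cap\Gamma_N}$. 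After summing over $\T_h$, the first remainder cancels on interior faces (opposite outward normals, single-valued $\hat v_h$) and vanishes on $\Gamma_D$ since $\hat v_h=0$ there, while the second is nonnegative because $\Gamma_-\subset\Gamma_D$ forces $\bm b\cdot\bm n\ge0$ on $\Gamma_N$. Combining with $\rho\ge\rho_0$ gives $B_h^{rc}(\bm v_h,\bm v_h)\ge\min(1,\gamma/2)\,\norm{\bm v_h}_{rc}^2$, which is \eqref{cc-rc}.

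\emph{Boundedness.} Write $\bm\xi=\bm v-\bm P_h\bm v=\{\xi,\hat\xi\}$. The only term not immediately controlled by $\norm{\bm\xi}_{rc,*}$ is the volume term $(\bm b\cdot\nabla\xi+c\xi,w_h)_K$, since that norm carries no derivative of $\xi$; so I integrate by parts to transfer the gradient onto $w_h$, producing $-(\xi,\bm b\cdot\nabla w_h)_K+(\xi,(c-\mathrm{div}\,\bm b)w_h)_K+\ang{(\bm b\cdot\bm n)\xi,w_h}_{\partial K}$. Because $P_h$ is the elementwise $L^2$-projection onto $U_h$ and, by (H1) and (H2), $\bm P_h^0\bm b\cdot\nabla w_h$ belongs to $U_h$ on each $K$ (a piecewise-constant vector dotted with $\nabla w_h\in[U_h]^d$), I may replace $\bm b$ by $\bm b-\bm P_h^0\bm b$ in $(\xi,\bm b\cdot\nabla w_h)_K$; the elementary estimate $\|\bm b-\bm P_h^0\bm b\|_{0,\infty,K}\le Ch_K|\bm b|_{1,\infty,K}$ together with the inverse inequality \eqref{inverse-ineq} then gives $|(\xi,\bm b\cdot\nabla w_h)_K|\le C\|\xi\|_{0,K}\|w_h\|_{0,K}$, and the $(c-\mathrm{div}\,\bm b)$-term is bounded the same way. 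Summing, the volume part is $\le C\|\xi\|_{0,\Omega}\|w_h\|_{0,\Omega}$, with $C$ depending on $\rho_0,\bm b,c$, which sits inside $\norm{\bm\xi}_{rc,*}\norm{\bm w_h}_{rc}$.

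It remains to collect the face terms — the $\ang{(\bm b\cdot\bm n)\xi,w_h}_{\partial K}$ just created and the original upwinding term $\ang{\hat\xi-\xi,[\bm b\cdot\bm n]_+\hat w_h-[\bm b\cdot\bm n]_-w_h}_{\dKN}$. Writing $[\bm b\cdot\bm n]_+\hat w_h-[\bm b\cdot\bm n]_-w_h=[\bm b\cdot\bm n]_+(\hat w_h-w_h)+(\bm b\cdot\bm n)w_h$ and absorbing the $(\bm b\cdot\bm n)w_h$ piece into the integration-by-parts boundary term, then using that $\hat\xi$ is single-valued on interior faces and vanishes on $\Gamma_D$, the surviving skeleton integrals reorganize into pairings of the form $\ang{(\,\cdot\,),|\bm b\cdot\bm n|^{1/2}(\hat w_h-w_h)}$ that are handled via $0\le[\,\cdot\,]_\pm\le|\,\cdot\,|$, face-by-face Cauchy-Schwarz and the trace inequality \eqref{trace-ineq}, and are bounded by $\norm{\bm\xi}_{rc,*}\norm{\bm w_h}_{rc}$; the remaining integrals on $\Gamma_N$, where $\bm b\cdot\bm n\ge0$, are controlled with \eqref{trace-ineq} and the bound $\|\xi\|_{0,\partial K}\le\norm{\bm\xi}_{rc,*}$. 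A final discrete Cauchy-Schwarz inequality over $\T_h$ yields \eqref{bdd-rc}.

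The main obstacle is the convective volume term: one has to eliminate $\nabla\xi$ without paying a gradient norm, and this is precisely where hypotheses (H1) and (H2), the $L^2$-orthogonality of $P_h$, the first-order approximation of $\bm b$ by piecewise constants, the inverse inequality, and the quasi-uniformity of $\{\T_h\}_h$ all enter essentially. Once the derivative is off $\xi$, what is left is bookkeeping, the only slightly delicate point being to regroup the $(\bm b\cdot\bm n)$-weighted face contributions — including those generated on $\Gamma_N$ by the integration by parts — so that the final estimate involves only the HDG-type norms appearing in the statement.
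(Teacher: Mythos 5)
Your coercivity argument is the paper's argument almost verbatim: Green's formula producing $\rho$ from \eqref{rho} and the half boundary term, the splitting of $\partial K$ into $\dKN$ and $\partial K\cap\Gamma_N$, the algebraic identity based on $[x]_+\pm[x]_-$ from \eqref{eq:bracket}, the telescoping of $\sum_{K}\ang{(\bm b\cdot\bm n)\hat v_h,\hat v_h}_{\dKN}$ over interior faces and $\Gamma_D$, and the sign of the $\Gamma_N$ contribution coming from $\Gamma_-\subset\Gamma_D$. The boundedness argument also follows the paper's route: integrate by parts, insert $\bm P_h^0\bm b$ so that $(\xi,\bm P_h^0\bm b\cdot\nabla w_h)_K$ vanishes by $L^2$-orthogonality together with (H1), estimate the remainder with $\|\bm b-\bm P_h^0\bm b\|_{0,\infty,K}\le Ch_K|\bm b|_{1,\infty}$ and the inverse inequality \eqref{inverse-ineq}, and regroup the face terms; your splitting via $[\bm b\cdot\bm n]_+(\hat w_h-w_h)+(\bm b\cdot\bm n)w_h$ differs superficially from the paper's $[\bm b\cdot\bm n]_-(\hat w_h-w_h)+(\bm b\cdot\bm n)\hat w_h$ but is equivalent once the single-valuedness cancellation is invoked.

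The one genuine gap is the Neumann boundary term $\ang{(\bm b\cdot\bm n)\xi,w_h}_{\Gamma_N}$ created by the integration by parts. You propose to control it ``with \eqref{trace-ineq} and the bound $\|\xi\|_{0,\partial K}\le\norm{\bm\xi}_{rc,*}$,'' but $\norm{\bm w_h}_{rc}$ contains no trace of $w_h$ on $\Gamma_N$ (the jump seminorm is taken over $\dKN$ only), so the only route to $w_h|_{\Gamma_N}$ is the trace inequality $\|w_h\|_{0,e}\le Ch_e^{-1/2}\|w_h\|_{0,K}$. The resulting bound is of the form $Ch^{-1/2}\|\xi\|_{0,\partial K}\,\|w_h\|_{0,\Omega}$, whose constant blows up as $h\to0$; this contradicts the assertion that $C_b^{rc}$ is independent of $h$ and would cost a factor $h^{1/2}$ in Theorem \ref{thm:err}. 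The paper repairs exactly this point by one more orthogonality: on each Neumann edge it replaces $\bm b\cdot\bm n$ by $\bm b\cdot\bm n-\hat P_h^0(\bm b\cdot\bm n)$, so the surviving coefficient is $O(h_K|\bm b|_{1,\infty})$ and overcompensates the $h_e^{-1/2}$ from the trace of $w_h$. You need this additional gain of a power of $h$ on $\Gamma_N$ (or an equivalent cancellation); the nonnegativity of $\bm b\cdot\bm n$ on $\Gamma_N$ that you invoke helps only in the coercivity part, not in the boundedness estimate.
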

\begin{proof}
Set $\bm z = \bm v - \bm P_h \bm v$.  
By Green's formula, we have
\begin{eqnarray*}
      B_h^{rc}(\bm z, \bm w_h) 
       &=& \sum_{K\in\T_h} (z ,(-\bm b) \cdot\nabla  w_h)_K + 
              \sum_{K\in\T_h}   ((c - \mathrm{div } \bm b) z, w_h)_K  \\ 
      & &  + \sum_{K\in\T_h} \left( \ang{(\bm b\cdot \bm n)z,w_h}_{\partial K}
           + \ang{\hat z -   z, [\bm b\cdot \bm n]_+ \hat w_h - [\bm b\cdot \bm n]_-w_h}_{\partial K\setminus\Gamma_N}\right) \\
      & =:& \textit{I} + \textit{II} + \textit{III}. \nonumber
\end{eqnarray*}
For any $K \in \T_h$, we have 
\begin{eqnarray*}
   (z , (-\bm b) \cdot\nabla w_h)_K 
       &=& (z,   (\bm P_h^0  \bm b - \bm b) \cdot\nabla w_h)_K - (z, (\bm P_h^0 \bm b)\cdot \nabla w_h)_K. 
\end{eqnarray*}
 From the property of the projection $\bm{P}_h^0$, it follows that the second term in the right-hand side of the above 
 equality vanishes.
Using the inverse inequality, we get
\begin{eqnarray} \label{bdd-rc2}
  |I| &\le&  \sum_{K\in\T_h} |(z, (\bm P_h^0  \bm b - \bm b) \cdot\nabla w_h)_K|  \\
       &\le&  C |\bm b|_{1,\infty} \sum_{K\in\T_h} \left(
       \|z\|_{0,K}\|w_h\|_{0,K}\right). \nonumber
\end{eqnarray}
  By using the Schwarz inequality, we have
\begin{eqnarray} \label{bdd-rc3}
  |\textit{II}|
    &\le C(|c|_{0,\infty} + |\bm b|_{1,\infty}) \sum_{K\in\T_h}(\|z\|_{0,K} \|w_h\|_{0,K}).
\end{eqnarray}
 To estimate the bound of $\textit{III}$, we rewrite it as follows: 
\begin{eqnarray} 
   \textit{III} 
   &=& \sum_{K\in\mathcal T_h} 
   \Big(\ang{(\bm b \cdot \bm n)z, w_h-\hat w_h}_{\partial K\setminus\Gamma_N} \\
   &&  \qquad+\ang{\hat z- z,[\bm b\cdot\bm n]_-(\hat w_h-w_h)}_{\partial K\setminus\Gamma_N}\Big) \label{bdd-rc6} \\
   & & + \ang{(\bm b\cdot\bm n)z, w_h}_{\Gamma_N}. \nonumber 
\end{eqnarray}
   From the property of the $L^2$-projection $\hat P_h^0$, we have 
\begin{eqnarray} \label{bdd-rc5}
       |\ang{(\bm b\cdot\bm n)z, w_h}_{\Gamma_N}|
       &=& 
       |\ang{(\bm b\cdot\bm n- \hat P_h^0(\bm b\cdot \bm n))z,w_h}_{\Gamma_N}| \\
       &\le& C|\bm b|_{1,\infty} \left( \sum_{K \in \T_h, 
         \dK \cap \Gamma_N \neq \emptyset}
       \|z\|^2_{0,\dK} \right)^{1/2} \cdot  \|w_h\|_{0,\Omega}. \nonumber
\end{eqnarray}
Here we have used the property of the $L^2$-projection $\bm P_h$, the trace theorem and
 the inverse inequality. Therefore we get
\begin{eqnarray} \label{bdd-rc4}
  |\textit{III}| \le C\|\bm b\|_{1,\infty} 
 \norm{z}_{rc,*} \norm{w_h}_{rc}.
\end{eqnarray}
From \eqref{bdd-rc2}, \eqref{bdd-rc3} and \eqref{bdd-rc4}, we obtain \eqref{bdd-rc}.

We now turn to the proof of (2).
By Green's formula, we have
\begin{eqnarray*}
  B_h^{rc}(\bm v_h, \bm v_h) 
  &=& \sum_{K\in\T_h} \Big( \int_K (c - \frac 1 2 \mathrm{div} \bm  b) v_h^2 dx
        \\
  &&   + \left.
  \frac 1 2  \int_{\partial K} (\bm b\cdot\bm n) v_h^2dx 
   + \ang{\hat v_h-v_h, [\bm b\cdot\bm n]_+\hat v_h - [\bm b\cdot\bm n]_-v_h}_{\dKN} \right) \\
  & =:& \textit{IV} + \textit{V}.
\end{eqnarray*}
By the assumption \eqref{rho}, we have
$
   \textit IV \ge \rho_0 \|v_h\|_{0,\Omega}^2.
$
 Since
\[
     \ang{(\bm b\cdot \bm n)v_h, v_h }_{\Gamma_N} \ge 0,
\]
we have
\begin{eqnarray} \label{eq10}
 \textit{V}
  &\ge& \sum_{K \in \T_h}
      \Big( \frac 1 2 \ang{(\bm b\cdot\bm n) v_h, v_h}_{\dKN}  \\
  & &
      \qquad+ \ang{\hat v_h-v_h, [\bm b\cdot\bm n]_+\hat v_h - [\bm b\cdot\bm n]_- v_h}_{\dKN} \Big) \nonumber \\
  &=&  \frac 1 2 \sum_{K\in\T_h}\Big( 
          \ang{([\bm b\cdot\bm n]_++[\bm b\cdot\bm n]_-) (\hat v_h -  v_h), \hat v_h - v_h}_{\dKN}\nonumber \\
  & &\qquad +  \ang{([\bm b\cdot\bm n]_+ - [\bm b\cdot\bm n]_-) \hat v_h, \hat v_h}_{\dKN} 
  \Big)\nonumber \\
  &\ge& \frac 1 2 \sum_{K\in\T_h}\Big( \ang{\gamma |\bm b\cdot\bm n|(\hat v_h - v_h), \hat v_h - v_h}_{\dKN}
      + \ang{(\bm b\cdot\bm n)\hat v_h , \hat v_h}_{{\dKN}}\Big). \nonumber
\end{eqnarray}
From the continuity of the flux, that is,
\[ 
\sum_{K\in\T_h} \ang{(\bm b\cdot\bm n)\hat v_h , \hat v_h}_{{\dKN}} = 0,
\]
we have
\begin{eqnarray*}
 \textit{V} \ge  \gamma\sum_{K\in\T_h} \| |\bm b \cdot \bm n|^{1/2} (\hat v_h - v_h)\|^2_{0,{\dKN}}.
\end{eqnarray*}
 Thus we obtain the coercivity \eqref{cc-rc}. \qed
\end{proof}

Now let us state our main result.  
\begin{theorem} \label{lem:cdr} 
\begin{enumerate}
\item (Galerkin orthogonality)
    Let $u$ be the exact solution of \eqref{cdr}, and set $\bm u = \{u,u|_{\Gamma_h}\}$. 
    Let $\bm u_h$ be the approximate solution by \eqref{hdg-cdr}. Then we have
    \begin{eqnarray*}
        \label{go}
         B_h(\bm u - \bm u_h,\bm v_h) = 0 \quad \forall \bm v_h \in \bm V_h.    
    \end{eqnarray*}
\item We assume (H3). Then, there exists a constant $C_b$ independent of $h$ and $\varepsilon$ such that
    \begin{eqnarray*}
       |B_h(\bm v - \bm P_h \bm v,\bm w_h)| \le C_b \norm{\bm v - \bm P_h\bm v}_* \norm{\bm w_h} \qquad  \bm v \in \bm V, \bm w_h \in \bm V_h. 
    \end{eqnarray*}
\item (Coercivity) There exists a constant $C_c$ independent of $h$ and $\varepsilon$ such that
   \begin{eqnarray}\label{cc}
     B_h(\bm v_h,\bm v_h) \ge C_c \norm{\bm v_h}^2 \qquad \bm v_h \in \bm V_{h}.
   \end{eqnarray}
\end{enumerate}
\end{theorem}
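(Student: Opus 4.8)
The three assertions split naturally along the decomposition $B_h = B_h^d + B_h^{rc}$, and the plan is to treat the diffusive and convective parts separately, invoking Lemmas~\ref{lem:diff} and~\ref{lem:rc}, and then recombine using the identities $\norm{\bm w}^2 = \norm{\bm w}_d^2 + \norm{\bm w}_{rc}^2$ and $\norm{\bm w}_*^2 = \norm{\bm w}_d^2 + \norm{\bm w}_{rc,*}^2$.

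For the Galerkin orthogonality (1), I would verify that the exact solution, lifted to $\bm u = \{u, u|_{\Gamma_h}\}$, solves the discrete problem \eqref{hdg-cdr}. Multiplying \eqref{cdr1a} by $v_h$, integrating by parts over each $K \in \T_h$, and summing, one uses the Neumann condition \eqref{cdr1b} on $\partial K \cap \Gamma_N$ to generate the term $\ang{g_N, v_h}_{\Gamma_N}$, which reproduces \eqref{diff1}. Since $u \in H^2(\Omega)$, the flux $\varepsilon\,\partial u/\partial n$ is single-valued across interior faces while $\hat v_h$ vanishes on $\Gamma_D$, so \eqref{diff2} holds and may be added in. Finally, for the conforming exact solution one has $\hat u = u$ on $\Gamma_h$, hence $\hat u - u \equiv 0$ on every $\dKN$; this annihilates the symmetrization and penalty terms in \eqref{hdg-d} and the upwind term in \eqref{hdg-rc}, leaving precisely $B_h(\bm u, \bm v_h) = (f,v_h)_\Omega + \ang{g_N, v_h}_{\Gamma_N}$. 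Subtracting the equation for $\bm u_h$ gives the orthogonality.

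For the boundedness (2), I would apply \eqref{bdd-d} to $B_h^d(\bm v - \bm P_h \bm v, \bm w_h)$ and \eqref{bdd-rc} to $B_h^{rc}(\bm v - \bm P_h \bm v, \bm w_h)$, then bound each factor from above: $\norm{\bm v - \bm P_h \bm v}_d \le \norm{\bm v - \bm P_h \bm v}_*$ and $\norm{\bm v - \bm P_h \bm v}_{rc,*} \le \norm{\bm v - \bm P_h \bm v}_*$, while $\norm{\bm w_h}_d \le \norm{\bm w_h}$ and $\norm{\bm w_h}_{rc} \le \norm{\bm w_h}$. Adding the two estimates gives the claim with $C_b = C_b^d + C_b^{rc}$. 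For the coercivity (3), adding \eqref{cc-d} and \eqref{cc-rc} and using $\norm{\bm v_h}_d^2 + \norm{\bm v_h}_{rc}^2 = \norm{\bm v_h}^2$ gives $C_c = \min(C_c^d, C_c^{rc})$; the stated $\varepsilon$- and $h$-independence is inherited directly from the two lemmas (note $B_h^{rc}$ carries no $\varepsilon$).

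The only delicate point is (1): one must be scrupulous about which boundary integrals survive — the $\Gamma_N$/$\Gamma_D$ split when integrating by parts, the cancellation of inter-element flux contributions under flux continuity, and the vanishing of $\hat u - u$ for the exact solution — and one should state explicitly the regularity ($u \in H^2(\Omega)$, or the higher regularity posed later) under which these manipulations are legitimate. Parts (2) and (3) are routine assembly once Lemmas~\ref{lem:diff} and~\ref{lem:rc} and the definition of the HDG norm are in hand.
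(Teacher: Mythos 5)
Your proposal is correct and follows essentially the same route as the paper: parts (2) and (3) are obtained by adding the boundedness and coercivity estimates of Lemmas~\ref{lem:diff} and~\ref{lem:rc} and using the decomposition of the HDG norms, while part (1) reduces to consistency of $B_h$, which holds because $\hat u - u = 0$ on $\Gamma_h$ kills the penalty, symmetrization, and upwind terms and the flux-continuity identity \eqref{diff2} absorbs the remaining boundary contribution. The only difference is that you spell out the integration-by-parts verification in more detail than the paper, which simply cites \eqref{cons}.
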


\begin{proof}
Combining Lemma \ref{lem:diff} and Lemma \ref{lem:rc}, we can easily get (2) and (3).
The Galerkin orthogonality (1) follows immediately from consistency, i.e.,
\begin{equation}
\label{cons}
  B_h(\bm u, \bm v_h) = (f,v_h) + \ang{g_N, v_h}_{\Gamma_N} \quad \forall \bm v_h \in \bm V_h.
\end{equation}
Noting that $u - \hat u = 0$ on $\Gamma_h$ and
\[
   \sum_{K\in\T_h}\ang{\frac{\partial u}{\partial n}, \hat v_h}_{\partial K\setminus \Gamma_N} = 0
\]
 for any $\hat v_h \in \hat U_h$, we have \eqref{cons}. \qed
\end{proof}
\begin{remark}
(Local solvability)
Let $\bm u_h = \{u_h, \hat u_h\}$ be an approximate solution of our HDG scheme \eqref{hdg-cdr}.
We point out that $u_h|_{K}$ can be determined by only $\hat u_h|_{\dK}$ for each $K \in \T_h$.
Taking $\bm v_h = \{v_h|_{K}, 0\}$ in \eqref{hdg-cdr}, we have
\begin{eqnarray} \label{ls01}
     && B_h(\{u_{h}|_{K}, 0\}, \{v_{h}|_{K},0\}) \\
     && \quad = (f,v_{h}|_{K})_K + \ang{g_N, v_{h}|_{K}}_{\Gamma_N\cap \dK}
         - B_h(\{0,\hat u_{h}|_{\dK}\}, \{v_{h}|_{K}, 0\}). \nonumber
\end{eqnarray}
Note that the support of $v_h$ is included in $K$.
In \eqref{cc}, choosing $\bm v_h = \{u_h|_K, 0\}$, we obtain 
\[
    B_h(\{ u_h|_K, 0\}, \{ u_h|_K, 0\}) \ge C_c \norm{\{u_h|_K,0\}}^2,  
\]
which implies that the discrete system of linear equations \eqref{ls01} is regular, that is,
$u_h|_K$ exists uniquely.  Hence we can eliminate $u_h|_K$ by means of $\hat u_h|_{\dK}$.
Consequently, only the inter-element unknown $\hat u_h$ remains in \eqref{hdg-cdr}, which
is the mechanism of the reduction of the size of the resulting matrix.  
\end{remark}
As a consequence of Theorem \ref{lem:cdr}, we obtain a priori error estimates of optimal order in the HDG norm.
\begin{theorem} \label{thm:err}
 Let $u$ be the exact solution of \eqref{cdr}, and let $\bm u = \{u,u|_{\Gamma_h}\}$. 
 Let $\bm u_h$ be the approximate solution by \eqref{hdg-cdr}.
 Recall that we are assuming (H1), (H2) and (H3).
 If $u \in H^{k+1}(\Omega)$, then  we have the following error estimate:
  \begin{eqnarray}
    \label{eq:err1}
    \norm{\bm u - \bm u_h}  \le C(\varepsilon^{1/2} + h^{1/2})h^k |u|_{k+1},
  \end{eqnarray}
where $C$ denotes a positive constant independent of $\varepsilon$ and  $h$.
\end{theorem}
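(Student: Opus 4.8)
The plan is to run the standard Céa-type / Strang argument adapted to the HDG setting, using the three properties collected in Theorem~\ref{lem:cdr}. First I would split the error via the projection $\bm P_h \bm u$, writing
\[
  \bm u - \bm u_h = (\bm u - \bm P_h \bm u) + (\bm P_h \bm u - \bm u_h) =: \bm\xi + \bm\theta,
\]
where $\bm\theta = \bm P_h \bm u - \bm u_h \in \bm V_h$ is the discrete part. Since $\bm u \in H^{k+1}(\Omega)$, Lemma~\ref{lem:proj} already controls $\bm\xi$: we have $\norm{\bm\xi}_* \le C(\varepsilon^{1/2}+h^{1/2})h^k|u|_{k+1}$ (combining the $\norm{\cdot}_d$ and $\norm{\cdot}_{rc,*}$ bounds), and likewise $\norm{\bm\xi} \le C(\varepsilon^{1/2}+h^{1/2})h^k|u|_{k+1}$. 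So it remains to bound $\norm{\bm\theta}$ by the same quantity, after which the triangle inequality $\norm{\bm u - \bm u_h} \le \norm{\bm\xi} + \norm{\bm\theta}$ finishes the proof (note $\norm{\bm\xi} \le \norm{\bm\xi}_*$ since $\norm{\cdot} \le \norm{\cdot}_*$).

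To estimate $\bm\theta$, I would test the coercivity~\eqref{cc} with $\bm v_h = \bm\theta$ and then invoke Galerkin orthogonality (Theorem~\ref{lem:cdr}(1)), which gives $B_h(\bm u - \bm u_h, \bm\theta) = 0$, hence $B_h(\bm\theta, \bm\theta) = -B_h(\bm\xi, \bm\theta) = B_h(\bm P_h\bm u - \bm u, \bm\theta)$. Wait — more precisely, from $B_h(\bm u - \bm u_h,\bm\theta)=0$ and $\bm u - \bm u_h = \bm\xi + \bm\theta$ we get $B_h(\bm\theta,\bm\theta) = -B_h(\bm\xi,\bm\theta) = -B_h(\bm u - \bm P_h\bm u, \bm\theta)$. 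Then
\[
  C_c \norm{\bm\theta}^2 \le B_h(\bm\theta,\bm\theta) = |B_h(\bm u - \bm P_h\bm u,\bm\theta)| \le C_b \norm{\bm u - \bm P_h\bm u}_* \norm{\bm\theta},
\]
using Theorem~\ref{lem:cdr}(2). Dividing by $\norm{\bm\theta}$ yields $\norm{\bm\theta} \le (C_b/C_c)\norm{\bm\xi}_*$, and applying Lemma~\ref{lem:proj} gives $\norm{\bm\theta} \le C(\varepsilon^{1/2}+h^{1/2})h^k|u|_{k+1}$. Combining with the bound on $\bm\xi$ produces~\eqref{eq:err1}.

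The only genuinely delicate point is a consistency/compatibility check rather than an estimate: one must be sure that $\bm u = \{u, u|_{\Gamma_h}\}$ actually lies in the space on which Theorem~\ref{lem:cdr}(2) applies as the first argument of the boundedness estimate, i.e. that $\norm{\bm u - \bm P_h\bm u}_*$ is finite and controlled — this is exactly what Lemma~\ref{lem:proj} provides under $u \in H^{k+1}(\Omega)$ — and that the mixed boundary condition on $\Gamma_N$ is handled consistently (the $\ang{g_N,v_h}_{\Gamma_N}$ term is built into both the scheme and the consistency identity~\eqref{cons}, so no extra work is needed). I expect no real obstacle; the argument is a routine Strang-lemma assembly once the boundedness is stated with the asymmetric norms $\norm{\cdot}_*$ on the projection error and $\norm{\cdot}$ on the discrete test function, which is precisely the form given in Theorem~\ref{lem:cdr}(2). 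The appearance of both $\varepsilon^{1/2}h^k$ and $h^{k+1/2}$ in the final bound is simply inherited from the two pieces of Lemma~\ref{lem:proj} ($\norm{\cdot}_d$ contributes $\varepsilon^{1/2}h^k$, $\norm{\cdot}_{rc,*}$ contributes $h^{k+1/2}$), so I would present the final line as $\norm{\bm u - \bm u_h} \le C(\norm{\bm\xi}_* ) \le C(\varepsilon^{1/2}+h^{1/2})h^k|u|_{k+1}$.
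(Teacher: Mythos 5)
Your proposal is correct and follows essentially the same route as the paper: coercivity applied to the discrete error $\bm P_h\bm u-\bm u_h$, Galerkin orthogonality to swap in the projection error, the boundedness estimate of Theorem~\ref{lem:cdr}(2) with the asymmetric norms $\norm{\cdot}_*$ and $\norm{\cdot}$, and the triangle inequality combined with Lemma~\ref{lem:proj}. The only cosmetic difference is the sign convention in the error splitting, which is immaterial.
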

\begin{proof}
By using the three properties in Lemma \ref{lem:cdr}, we deduce
  \begin{eqnarray*}   
 C_c \norm{\bm u_h - \bm P_h \bm u}^2  
  &\le& B_h(\bm u_h - \bm P_h \bm u,\bm u_h - \bm P_h \bm u) \\ 
  & =&  B_h(\bm u - \bm P_h  \bm u, \bm u_h - \bm P_h  \bm u)  \\  
  &\le&  C_b 
       \norm{\bm u - \bm P_h \bm u}_*  \norm{ \bm u_h - \bm P_h \bm u}.
  \end{eqnarray*}
 Hence we have 
\[
    \norm{\bm u_h - \bm P_h \bm u} \le \frac{C_b}{C_c}\norm{\bm u - \bm P_h \bm u}_*.
\]
By the triangle inequality and Lemma \ref{lem:proj}, we have
  \begin{eqnarray*}
   \norm{\bm u -  \bm u_h}
  & \le & \norm{\bm u_h - \bm P_h \bm u} + \norm{\bm u - \bm P_h \bm u} \\
   & \le &(1+\frac{C_b}{C_c}) \norm{\bm u - \bm P_h \bm u}_* \\
   & \le &C( \varepsilon^{1/2}  +  h^{1/2}) h^k |u|_{k+1},
  \end{eqnarray*}
which completes the proof. \qed
\end{proof}
 
\section{The relation between $\bm u_h$ and  the solution of the reduced problem}
Let $u_0$ be the solution of the reduced problem of \eqref{cdr} :
\begin{subequations}
 \label{reduced-problem}
\begin{eqnarray}
     \bm b \cdot \nabla u_0 + cu_0 &=& f  \textrm{ in } \Omega, \\
       u_0 &=& 0   \textrm{ on } \Gamma_D. 
\end{eqnarray} 
\end{subequations}
 Here we assume that $\Gamma_D = \Gamma_-$, $g_N \equiv 0$, and
 that the existence and uniqueness of  a solution $u_0 \in H^2(\Omega)$ to
 \eqref{reduced-problem}.
 Set $\bm  u_0 = \{ u_0, u_0|_{\Gamma_h}\}$ $\in \bm V_h$.
  The aim of this section is to show an approximate solution $\bm u_h$ gets closer to $\bm u_0$ when $\varepsilon$ tends to 0.
This suggests that our hybridized DG method \eqref{hdg-cdr} is 
stable even when $\varepsilon$ is very small.

\begin{theorem} \label{stab-uheps}
 Let $\bm u_{h}$  be the approximate solution
 defined by \eqref{hdg-cdr}. 
 Then we have the following inequality:
 \begin{eqnarray} \label{stab0} 
  \norm{\bm u_h - {\bm u}_0 } 
 &\le C\left(\norm{ {\bm u}_0}_d + \norm{{\bm u}_0 - \bm P_h {\bm u}_0 }\right),
 \end{eqnarray}
where $C$ is a constant  independent of $\varepsilon$ and $h$.
\end{theorem}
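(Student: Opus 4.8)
The plan is to imitate the proof of Theorem~\ref{thm:err}, but with the roles of the full problem and the reduced problem interchanged in the consistency argument. Since $\bm u_h - \bm P_h\bm u_0 \in \bm V_h$, coercivity \eqref{cc} gives $C_c\norm{\bm u_h - \bm P_h\bm u_0}^2 \le B_h(\bm u_h - \bm P_h\bm u_0, \bm u_h - \bm P_h\bm u_0)$, and I would split this into $B_h(\bm u_h - \bm u_0, \cdot) + B_h(\bm u_0 - \bm P_h\bm u_0, \cdot)$. The second piece is handled by boundedness (part (2) of Theorem~\ref{lem:cdr}) and absorbed into the right-hand side of \eqref{stab0} via $\norm{\bm u_0 - \bm P_h\bm u_0} \le \norm{\bm u_0 - \bm P_h\bm u_0}_*$ together with Lemma~\ref{lem:proj}. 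The first piece is where the reduced problem enters: writing $\bm w_h := \bm u_h - \bm P_h\bm u_0$, I would use the scheme \eqref{hdg-cdr} for $\bm u_h$ and compute $B_h(\bm u_0, \bm w_h)$ directly.

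The key computation is to evaluate $B_h(\bm u_0,\bm w_h) = B_h^d(\bm u_0,\bm w_h) + B_h^{rc}(\bm u_0,\bm w_h)$. Because $u_0$ solves the reduced equation $\bm b\cdot\nabla u_0 + cu_0 = f$ pointwise and $u_0 - u_0|_{\Gamma_h} = 0$ on $\Gamma_h$ (so all the jump terms $\hat u_0 - u_0$ vanish), the convective part collapses: $B_h^{rc}(\bm u_0,\bm w_h) = \sum_K (\bm b\cdot\nabla u_0 + cu_0, w_h)_K = (f,w_h)_\Omega$, using also $g_N\equiv 0$ so there is no $\Gamma_N$ contribution. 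Hence
\begin{eqnarray*}
   B_h(\bm u_h - \bm u_0, \bm w_h)
   &=& \big[(f,w_h)_\Omega + \ang{g_N,w_h}_{\Gamma_N}\big] - B_h(\bm u_0,\bm w_h) \\
   &=& -B_h^d(\bm u_0,\bm w_h).
\end{eqnarray*}
Now $B_h^d$ carries a factor $\varepsilon$, and by boundedness of the diffusive form \eqref{bdd-d}, $|B_h^d(\bm u_0,\bm w_h)| \le C_b^d\norm{\bm u_0}_d\norm{\bm w_h}_d \le C_b^d\norm{\bm u_0}_d\norm{\bm w_h}$. (One must check that $\norm{\bm u_0}_d$ is finite, i.e. that $u_0\in H^2(\Omega)$ as assumed, and that the jump seminorm $|\bm u_0|_{j,h}$ of the pair $\{u_0,u_0|_{\Gamma_h}\}$ vanishes, which it does.) Dividing through by $\norm{\bm w_h}$ yields $\norm{\bm u_h - \bm P_h\bm u_0} \le C(\norm{\bm u_0}_d + \norm{\bm u_0 - \bm P_h\bm u_0}_*)$, and then the triangle inequality $\norm{\bm u_h - \bm u_0} \le \norm{\bm u_h - \bm P_h\bm u_0} + \norm{\bm u_0 - \bm P_h\bm u_0}$ finishes the estimate, after noting $\norm{\bm u_0 - \bm P_h\bm u_0}_* \le C\norm{\bm u_0 - \bm P_h\bm u_0}$ can be replaced by the $*$-free norm up to the cited projection bounds — or simply keeping $\norm{\bm u_0 - \bm P_h\bm u_0}$ as the statement permits.

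The main obstacle, I expect, is the bookkeeping around the boundary terms: one must be careful that the consistency identity used to derive \eqref{cons} still applies (it does, since $u_0 - \hat u_0 = 0$ on $\Gamma_h$), that dropping the $\Gamma_N$ flux terms is legitimate under the standing assumptions $\Gamma_D = \Gamma_-$ and $g_N\equiv 0$, and that $B_h^{rc}(\bm u_0,\bm w_h)$ really does reduce to $(f,w_h)_\Omega$ with no leftover interface or outflow contributions — this relies crucially on the jumps of $\bm u_0$ being zero, so that the upwinding term $\ang{\hat u_0 - u_0, [\bm b\cdot\bm n]_+\hat w_h - [\bm b\cdot\bm n]_- w_h}_{\dKN}$ vanishes identically. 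Once that is in hand, everything else is a routine application of coercivity, boundedness, and the triangle inequality, exactly paralleling Theorem~\ref{thm:err} but with the $\varepsilon^{1/2}h^k|u|_{k+1}$ term replaced by $\norm{\bm u_0}_d$, which is itself $O(\varepsilon^{1/2})$.
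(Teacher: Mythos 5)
Your proposal is correct and follows essentially the same route as the paper: the key step in both is the consistency of $B_h^{rc}(\cdot,\cdot)$ with the reduced problem (the jump terms of $\bm u_0$ vanish and $\bm b\cdot\nabla u_0+cu_0=f$), so that subtracting from the scheme leaves only $B_h^d(\bm u_0,\cdot)$ as the residual, which is then handled by coercivity, the boundedness of $B_h^d$ and $B_h$, and the triangle inequality. Your remark distinguishing $\norm{\cdot}_*$ from $\norm{\cdot}$ for the projection-error factor is in fact slightly more careful than the paper's own write-up.
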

\begin{proof}
By the consistency of $B_h^{rc}(\cdot,\cdot)$, we have
\begin{eqnarray}
 B_h^{rc} (\bm u_0, \bm v_h) = (f, v_h),
\end{eqnarray}
from which it follows that 
\begin{eqnarray}
  B_h({\bm u}_0, \bm v_h) = (f,v_h) + B_h^d(\bm u_0, \bm v_h).  \label{stab2}
\end{eqnarray}
Subtracting \eqref{stab2} from \eqref{hdg-cdr} gives us 
\begin{eqnarray}
  B_h({\bm u}_0 - \bm u_h, \bm v_h)  =  B^d_h(\bm u_0,\bm v_h)
 \quad \forall \bm v_h \in \bm V_h.
 \label{stab3}
\end{eqnarray} 
Choosing $\bm v_h =  \bm u_h - \bm P_h {\bm u}_0 \in \bm V_{h}$ in \eqref{cc}, we have
\begin{eqnarray*}
 & & C_c\norm{\bm u_h - \bm P_h {\bm u}_0 }^2 \\
 & & \quad \le B_h(\bm u_h - \bm P_h {\bm u}_0, \bm u_h - \bm P_h {\bm u}_0) \\
 &&\quad  =  B_h(\bm u_h - {\bm u}_0 , \bm u_h - \bm P_h {\bm u}_0) 
 +B_h({\bm u}_0 - \bm P_h {\bm u}_0, \bm u_h - \bm P_h {\bm u}_0)\\
 &&\quad \le  |B^d_h({\bm u}_0, \bm u_h -\bm P_h {\bm u}_0)| 
   +|B_h({\bm u}_0 - \bm P_h {\bm u}_0, \bm u_h - \bm P_h {\bm u}_0)|\\
 &&\quad \le C_b^d\norm{{\bm u}_0}_d   \norm{\bm  u_h - \bm P_h {\bm u}_0}_d  +  C_b\norm{{\bm u}_0 - \bm P_h {\bm u}_0 }  \norm{ \bm u_h - \bm P_h {\bm u}_0 }.
\end{eqnarray*}
Then we have
\begin{eqnarray*}
 C_c \norm{ \bm u_h -  \bm P_h {\bm u}_0 }
\le  C_b^d\norm{{\bm u}_0 }_d + C_b\norm{{\bm u}_0 - \bm P_h{\bm u}_0 }. 
\end{eqnarray*}
By the triangle inequality, we get
\begin{eqnarray*}
   \norm{ \bm u_h -  {\bm u}_0 }
 &\le& \norm{ \bm u_h -  \bm P_h {\bm u}_0 } +   \norm{{\bm u}_0 -  \bm P_h{\bm u}_0 }  \\
 &\le& \frac{C_b^d}{C_c}\norm{{\bm u}_0}_d + \left(1+\frac{C_b}{C_c}\right)\norm{{\bm u}_0 - \bm P_h{\bm u}_0 }.
\end{eqnarray*}
Since $C_b^d$, $C_c$ and $C_b$ are independent of $\varepsilon$,
 we obtain the inequality \eqref{stab0}. \qed
\end{proof}

\section{Numerical results}
\subsection{The case of smooth solutions}
Let $\Omega$ be the unit square domain, $\bm{b} = (1,1)^T$ and $c \equiv 0$.
Then, the test problem reads
\begin{subequations}\label{ex1}
\begin{eqnarray} 
    -\varepsilon \Delta u + \bm{b} \cdot \nabla u  &=& f  \textrm{ in } \Omega,  \\
                                                u  &=& 0  \textrm{ on } \Gamma_D = \partial\Omega,
\end{eqnarray}
\end{subequations}
where the data $f$ is chosen so that the exact solution is 
\[
   u(x,y) = \sin(\pi x)\sin(\pi y),
\]
which is a smooth function. 
We computed approximate solutions of \eqref{hdg-cdr} with piecewise linear elements
 for different $\varepsilon = 1, 10^{-3}, 10^{-6}$ to confirm that our scheme is valid for
 not only diffusion-dominated cases but also convection-dominated ones.
The meshes we used are uniform triangular ones.
Figure \ref{ss-cdiagrams} shows the convergence diagrams with respect to the $L^2$-norm and $H^1(\T_h)$-seminorm.
It can be observed that the convergence rates are optimal 
for each $\varepsilon$. 
  
\begin{figure}[htbp]
\begin{center}
\begin{tabular}{c}
\includegraphics[width=84mm]{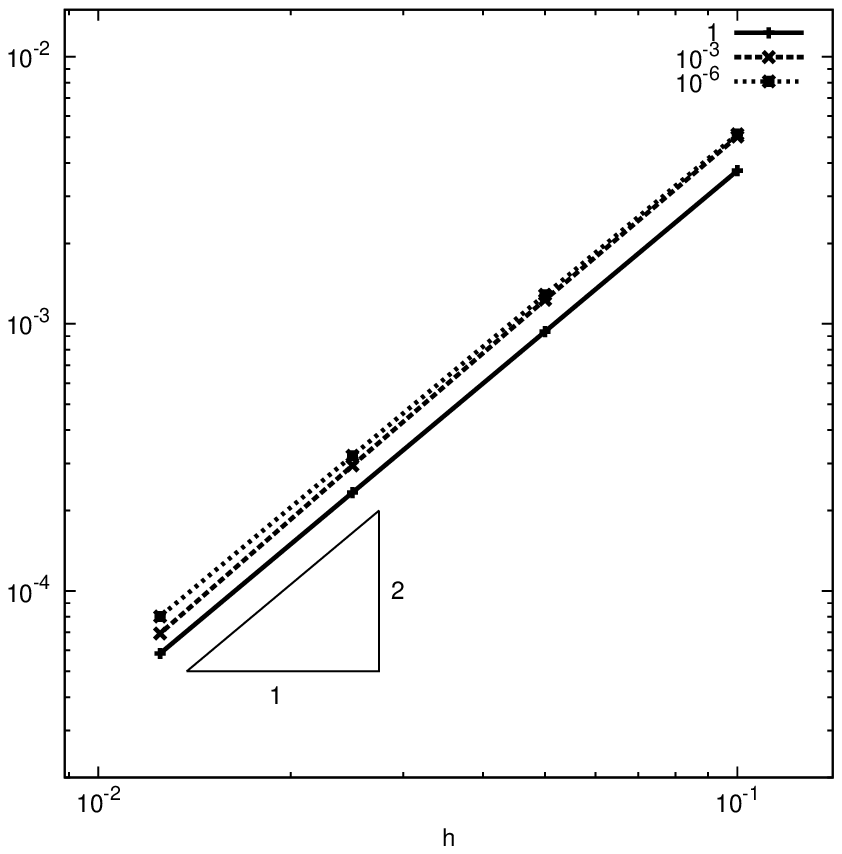}
\\
\includegraphics[width=84mm]{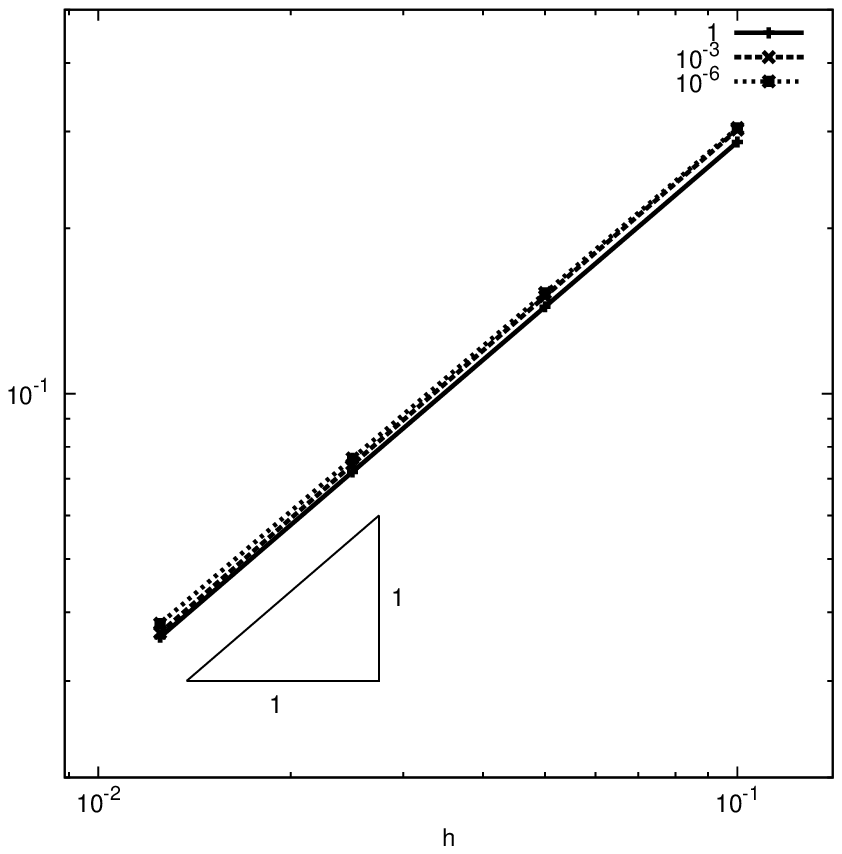}
\end{tabular}
\end{center}
\caption{Convergence diagrams in the case of smooth solutions. $L^2$-errors(top) and $H^1(\mathcal T_h)$-errors(bottom) in $\Omega$ for various $\varepsilon$.} 
\label{ss-cdiagrams}
\end{figure}

\subsection{Boundary layers}
 Next, we show the results in the case that an exact solution has a boundary layer. 
 Let us consider the same problem in the previous, that is, $\Omega=(0,1)^2$, $\bm{b} = (1,1)^T$ and $c \equiv 0$.
 The data $f$ is given so that the exact solution is
\[
   u(x,y) =  \sin(\pi x/2)\sin(\pi y/2) \left(1 - e^{(x-1)/\varepsilon}\right)\left(1 - e^{(y-1)/\varepsilon}\right). 
\]
This solution $u$ has a boundary layer near $x = 1$ or $y=1$.
In Figure \ref{bl-p1-hdg}, we display the graphs of the approximate solutions $u_h$ and $\hat u_h$ for $h = 1/10$.
It can be seen that no spurious oscillation appears 
unlike the classical finite element method.
\rev{
Notice that our approximate solutions $u_h$ and $\hat u_h$ do not exactly capture the boundary layer.
}
For the comparison, we also display the approximate solutions by the streamline upwind Petrov-Galerkin (SUPG) method (see, for example, \cite{BS82,KA03}) in Figure \ref{bl-supg-p1}. 
A stabilization parameter $\tau_K$ is given by
\[
    \tau_K = \frac{h_K}{2\|\bm b\|_{0,\infty,K}}
    \left(\coth Pe_K - \frac{1}{Pe_K}\right),
\]
where $Pe_K$ is the local P\'eclet number, defined by $Pe_K = \|\bm b\|_{0,\infty,K}/(2\varepsilon)$.
We refer to \cite{JK07,Knobloch09} on the parameters of the SUPG method.
From Figure \ref{bl-supg-p1}, it can be seen that  spurious overshoot is observed
near the boundary layer in the SUPG solutions, which suggests that our hybridized scheme is more stable than the SUPG one. 
We emphasize that hybridized schemes do not need any stabilization parameter such as $\tau_K$. In the SUPG method,
 some manipulation of parameters is inevitable.  
Figure \ref{bl-cdiagrams} shows that the convergence diagrams in $\Omega_{0.9} := (0, 0.9)^2$.
We observe that the convergence rates in the $L^2$-norm and $H^1(\mathcal T_h)$-seminorm  are optimal.
\begin{figure}[htbp] 
\begin{center}
\begin{tabular}{c}
\begin{minipage}[t]{.5\textwidth}
\includegraphics[width=\textwidth]{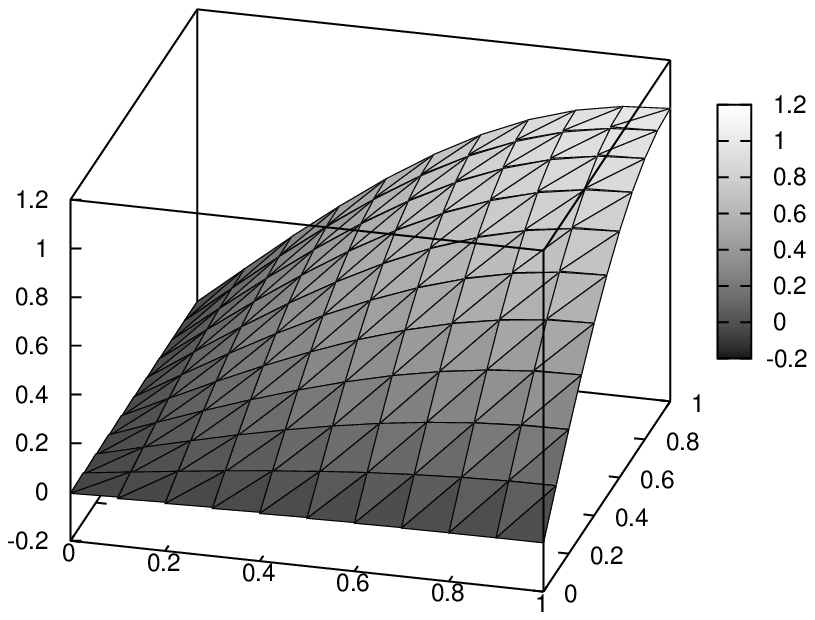}
\end{minipage}
\begin{minipage}[t]{.5\textwidth}
\includegraphics[width=\textwidth]{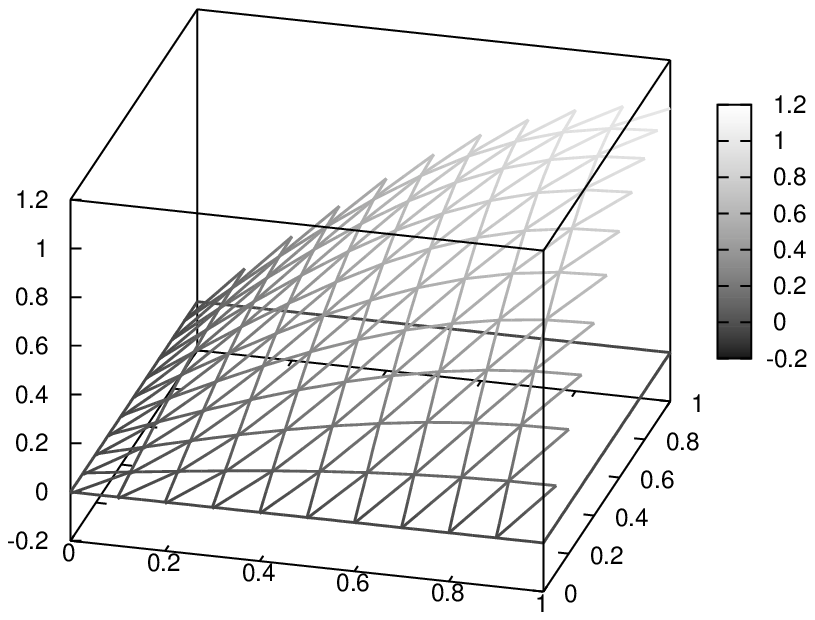}
\end{minipage}
\end{tabular}
\end{center}
\caption{Our HDG solutions $u_h$(left) and $\hat u_h$(right) for $h=1/10$ and  $\varepsilon=10^{-6}$.} 
\label{bl-p1-hdg}
\end{figure}

\begin{figure}[htbp] 
\begin{center}
\begin{tabular}{c}
\begin{minipage}[t]{.5\textwidth}
\includegraphics[width=\textwidth]{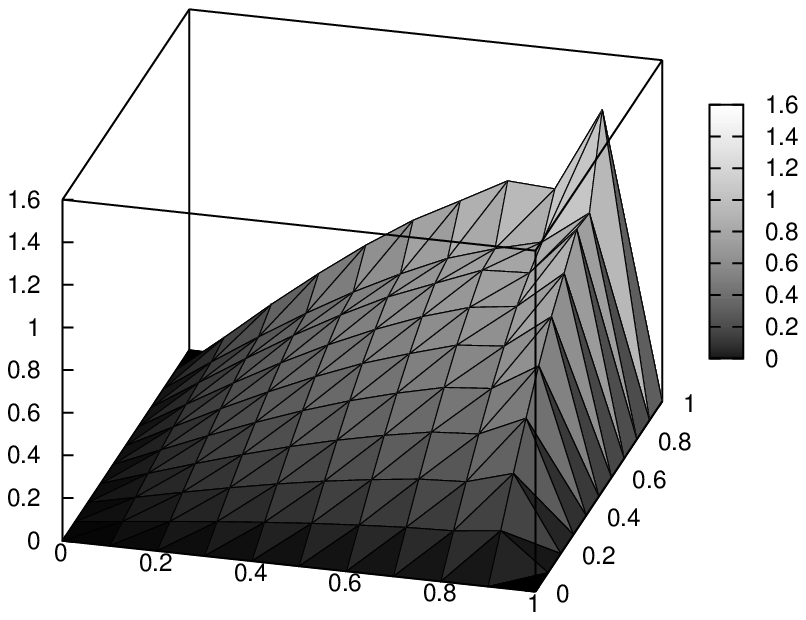}
\end{minipage}
\begin{minipage}[t]{.5\textwidth}
\includegraphics[width=\textwidth]{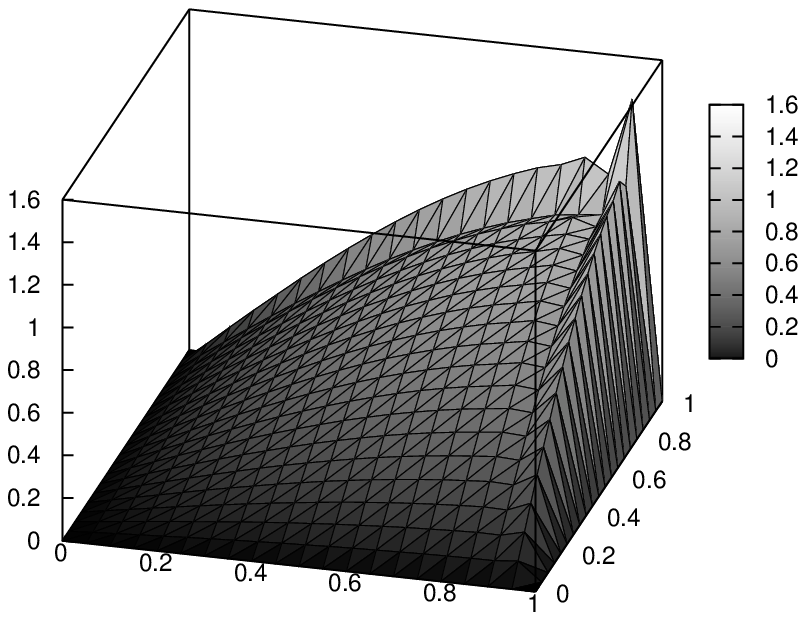}
\end{minipage}
\end{tabular}
\end{center}
\caption{The SUPG solutions for $h=1/10$ (left) and $h=1/20$(right) in the case of  $\varepsilon=10^{-6}$.} 
\label{bl-supg-p1}
\end{figure}

\begin{figure}[htbp]
\begin{center}
\begin{tabular}{c}
\includegraphics[width=84mm]{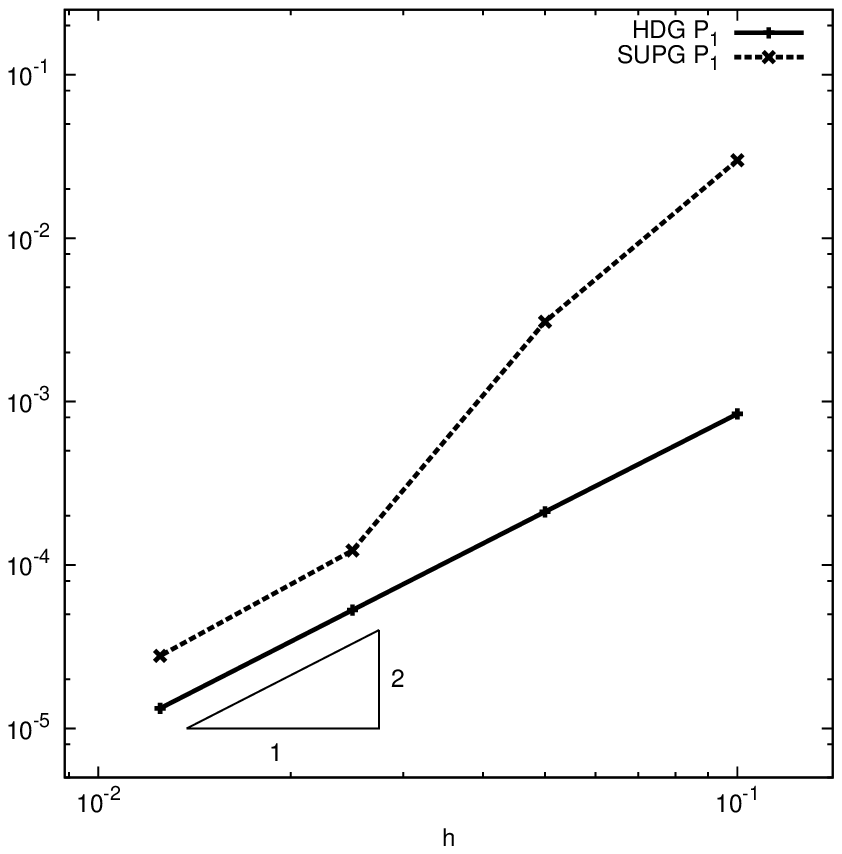}
\\
\includegraphics[width=84mm]{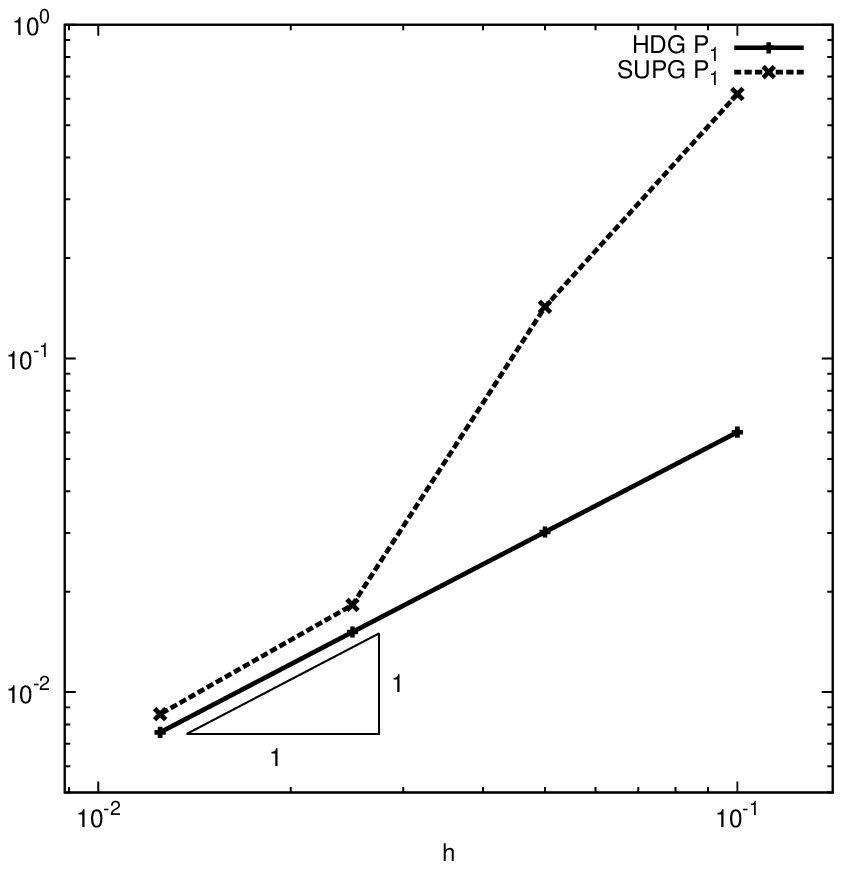}
\end{tabular}
\end{center}
\caption{Convergence diagrams in the boundary layer case. $L^2$-errors(top) and $H^1(\mathcal T_h)$-errors(bottom) in $\Omega_{0.9}$ for $\varepsilon=10^{-6}$.} 
 \label{bl-cdiagrams}
\end{figure}

\subsection{Continuous approximations for $\hat u_h$}
\rev{We will present  numerical results when continuous funcions are employed for $\hat u_h$. 
The same problem in Section 6.2 is considered.
Figure \ref{bl-p1-hdg-conti} displays the graphs of the approximate solutions in the case
$\varepsilon=10^{-6}$ and $h=1/10$.
It can be seen that overshoots  appear near the outflow boundary, which is similar
to the result by the SUPG method. 
This suggests that  discontinuous approximations may have better
stability properties than continuous ones in the convection-dominated cases.} 

\begin{figure}[thbp] 
\begin{center}
\begin{tabular}{c}
\begin{minipage}[t]{.5\textwidth}
\includegraphics[width=\textwidth]{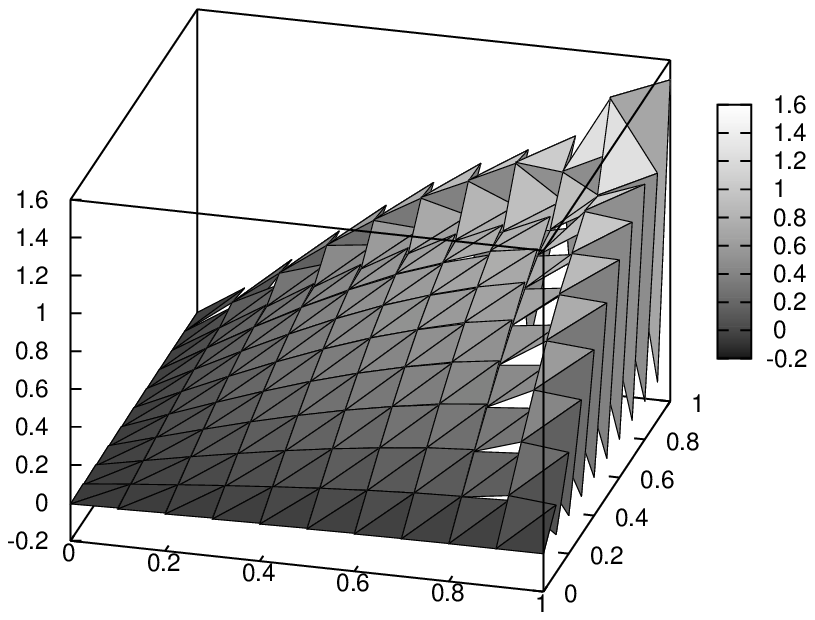}
\end{minipage}
\begin{minipage}[t]{.5\textwidth}
\includegraphics[width=\textwidth]{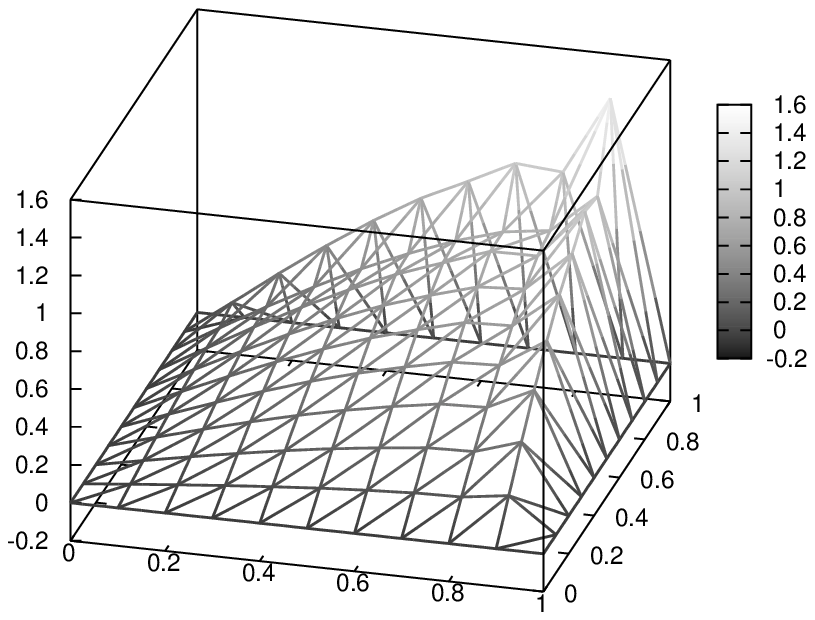}
\end{minipage}
\end{tabular}
\end{center}
\caption{Our HDG solutions  in the case that $\hat u_h$ is continuous: $u_h$(left) and $\hat u_h$(right).  }
\label{bl-p1-hdg-conti}
\end{figure}

\section{Conclusions} 
We have presented a new derivation of a hybridized scheme for the convection-diffusion problems.
In our formulation, an upwinding term is added to satisfy the coercivity of a convective part. 
As a result, our scheme is stable even when $\varepsilon \downarrow 0$.
Indeed, numerical results show that no spurious oscillation appears in our approximate solutions.
We have proved the error estimates of optimal order  in the HDG norm, and
discussed the relation between our approximate solution and an exact one of the reduced problem.

\begin{acknowledgement}
I thank Professors B. Cockburn, F. Kikuchi and N. Saito 
who encouraged me through valuable discussions. 
\end{acknowledgement}

\bibliographystyle{spmpsci}
\bibliography{references}  

\end{document}